\documentclass[11pt]{article}

\usepackage{amsfonts}
\usepackage{amsmath}
\usepackage{amsthm}
\usepackage{amssymb}
\usepackage{esint}
\usepackage{bbm}
\usepackage{fullpage}
\usepackage{hyperref}
\usepackage{url}
\usepackage[sort,numbers]{natbib}

\theoremstyle{plain} 
\newtheorem{theorem}{Theorem}[section]
\newtheorem{corollary}[theorem]{Corollary}
\newtheorem{lemma}[theorem]{Lemma}

\newtheorem{proposition}[theorem]{Proposition}

\newtheorem*{lemma*}{Lemma}
\newtheorem*{claim*}{Claim}
\newtheorem*{theorem*}{Theorem}
\newtheorem*{proposition*}{Proposition}
\newtheorem*{problemstatement*}{Problem}
\newtheorem*{corollary*}{Corollary}

\theoremstyle{definition} 
\newtheorem{definition}[theorem]{Definition}

\newtheorem*{notation*}{Notation}
\newtheorem*{definition*}{Definition}
\newtheorem*{conjecture*}{Conjecture}
\newtheorem*{example*}{Example}

\theoremstyle{remark} 
\newtheorem{remark}[theorem]{Remark}

\begin{document}
\title{A probabilistic look at the infinite hat-guessing game}
\author{Nathaniel Eldredge\footnote{Department of Mathematical Sciences, University of Northern Colorado; Greeley, Colorado, USA.  \mbox{Email: \href{mailto:neldredge@unco.edu}{neldredge@unco.edu}}}}
\maketitle

\begin{abstract}
  In this article, we look at a hat-guessing game, in which each player must guess the color of their own hat while only seeing the hats of the other players.  We focus on the case of two hat colors and a countably infinite number of players.   By strategizing in advance, the players can, in some ways, do much better than random guessing; using the axiom of choice, they can in fact achieve highly counter-intuitive success.  We review some of these results.  Then, we use tools from probability to obtain bounds on how successful a strategy can be under a measurability hypothesis, in terms of the asymptotic density of the set of correctly guessing players.  As we discuss, this illustrates that the full axiom of choice is truly necessary for the counter-intuitively successful strategies, and that there is a wide gap between what can be achieved with and without choice.
\end{abstract}

\section{Introduction}

Consider the following hat-guessing game: some number of players (possibly infinite) gather in a large room.  Each player is assigned, by fair coin flip,  either a black or a white hat, which is placed on their head.    They cannot see the color of their own hat, but can see the hats of all the other players.  Then each player is asked to guess the color of their own hat.  The players do not hear each other's guesses, and cannot communicate in any way after the hats have been distributed.  However, they are allowed to strategize together before the game starts, with the aim of maximizing the fraction of players who guess correctly.

On its face, it may seem that the players cannot do better than random guessing, as the information available to each player (the colors of the other players' hats) appears to be irrelevant to what the player must guess (the color of their own hat).   However, the true state of affairs is more subtle.  The interesting feature of this game is that, although indeed no \emph{individual} player can do better than a random guess, they can make use of their shared information to introduce correlations between their guesses, and this can lead to better outcomes for the group as a whole.

Our main interest here will be the case of (countably) infinitely many players, and on connections with the axiom of choice ($\mathsf{AC}$).  In this setting, it has been shown \cite{ht-intel} that there exist strategies for the game that are shockingly successful, in that they ensure the players guess correctly to an extent that intuitively seems impossible.  For instance, using $\mathsf{AC}$ one can produce a strategy that guarantees that \emph{all but finitely many} players will correctly guess the colors of their own hats!  We shall review a few of these results in Section \ref{sec:AC}, and by the end of this paper, you may agree with the author that they should rank with the Banach--Tarski paradox among the most counter-intuitive consequences of the choice axiom.

On the other hand, although using $\mathsf{AC}$ we can prove the \emph{existence} of such strategies, we cannot in any sense describe them explicitly.  Indeed, it is known that these strategies correspond to subsets of $\mathbb{R}$ which are  not Lebesgue measurable and lack the property of Baire.  This means, as we discuss in Section \ref{sec:alt}, that we really need the full strength of the axiom of choice to produce them, so that they are, in some sense, inherently non-constructive.    But it also means that, if we were to reject $\mathsf{AC}$ and adopt certain alternative axioms, we could live in a consistent mathematical universe where such strategies simply do not exist, hewing closer to our intuition about the game.  

In Section \ref{sec:meas}, we focus on measurable strategies.  This allows us to bring probability theory to bear, which we think provides a helpful perspective on the game.  The concept of independence is particularly useful.  It seems to us that a probabilistic approach to the hat-guessing game is quite natural, but we have not seen this view taken explicitly in the existing literature, which has largely been based on combinatorics and/or descriptive set theory.

The main new result of this paper is Theorem \ref{main},  obtaining almost sure bounds on the asymptotic density of the set of players who guess correctly.  This shows, in a rather strong sense, that the success of the $\mathsf{AC}$ strategies is far beyond what can be attained by measurable strategies, and in particular, far beyond what can be achieved without $\mathsf{AC}$.

In Section \ref{sec:explicit-strategies}, we further explore what measurable strategies can achieve in terms of asymptotic density.  Even within the bounds established by Theorem \ref{main}, some surprising results remain possible.  We also look briefly at other variants on the game in Section \ref{sec:variants}.

The author would like to thank Clinton Conley, Elliot Glazer, and Charles Wang for helpful discussions.  Thanks are also due to the students in MATH 6710 at Cornell University in Fall 2012, for their patience in working on a homework exercise, assigned by the author, that eventually led to this paper.

\section{Finite hat-guessing}\label{sec:finite}

As a warm-up, we consider the hat-guessing game with a \emph{finite} number of players $n$.  Although elementary, it already has some features of interest.

For $1 \le i \le n$, let $\xi_i$ be a random variable (all defined on some fixed probability space $(\Omega, \mathcal{F}, \mathbb{P})$) representing the color of the hat assigned to player $i$, taking the values $0$ and $1$ for ``black'' and ``white'' respectively.  Since we are thinking of the hats as assigned by fair coin flips, we'll take $\{\xi_i : 1 \le i \le n\}$ to be iid, with $\mathbb{P}(\xi_i = 0) = \mathbb{P}(\xi_i = 1) = \frac{1}{2}$.

Now let $X_i$ be the color guessed by player $i$, according to whatever strategy has been agreed by the players in advance.  To enforce the rule that players cannot see their own hats, for each $i$ define the $\sigma$-field $\mathcal{G}_i = \sigma(\xi_1, \dots, \xi_{i-1}, \xi_{i+1}, \dots, \xi_n)$ representing the information available to player $i$.  We then require that $X_i$ is a $\mathcal{G}_i$-measurable random variable, which, following a common abuse of notation, we abbreviate by $X_i \in \mathcal{G}_i$.  This is equivalent to requiring that each $X_i$ can be written as $X_i = f_i(\xi_1, \dots, \xi_{i-1}, \xi_{i+1}, \dots, \xi_n)$ for some function $f_i : \{0,1\}^{n-1} \to \{0,1\}$.

Thus a \emph{strategy} in this game may be represented simply by a family of $\{0,1\}$-valued random variables $X_1, \dots, X_n$, satisfying the above condition that $X_i \in \mathcal{G}_i$ for each $i$; or, equivalently, by the corresponding functions $f_i$.

We begin by noting that $X_i \in \mathcal{G}_i$ implies that $X_i$ is independent of $\xi_i$.  This easily implies that each player, considered individually, can do no better (nor worse) than 50--50.

\begin{proposition}[cf.~{\cite[Lemma 2]{ht-intel}, \cite[Lemma 2.2.1]{ht-book}}]\label{fifty-fifty}
  For each $i$, we have $\mathbb{P}(X_i = \xi_i) = \frac{1}{2}$.
\end{proposition}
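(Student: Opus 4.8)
The plan is to show that $X_i$ is independent of $\xi_i$, and then deduce the probability from independence together with the fact that both variables are $\{0,1\}$-valued and $\xi_i$ is an unbiased coin flip.

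First I would establish the independence claim flagged in the paragraph preceding the proposition. Since $X_i \in \mathcal{G}_i = \sigma(\xi_1, \dots, \xi_{i-1}, \xi_{i+1}, \dots, \xi_n)$, the random variable $X_i$ is a function of $(\xi_j)_{j \neq i}$ only. The family $\{\xi_1, \dots, \xi_n\}$ is iid, hence in particular the collection $(\xi_j)_{j \neq i}$ is independent of $\xi_i$. Because $X_i$ is measurable with respect to the $\sigma$-field generated by $(\xi_j)_{j \neq i}$, it follows that $X_i$ is independent of $\xi_i$ as well.

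Next, I would compute the probability directly by conditioning on the value of $X_i$ and exploiting independence. Writing
\[
  \mathbb{P}(X_i = \xi_i) = \mathbb{P}(X_i = 0,\, \xi_i = 0) + \mathbb{P}(X_i = 1,\, \xi_i = 1),
\]
the independence of $X_i$ and $\xi_i$ lets me factor each term as $\mathbb{P}(X_i = k)\,\mathbb{P}(\xi_i = k)$. Since $\mathbb{P}(\xi_i = 0) = \mathbb{P}(\xi_i = 1) = \tfrac{1}{2}$, this collapses to $\tfrac{1}{2}\bigl(\mathbb{P}(X_i = 0) + \mathbb{P}(X_i = 1)\bigr) = \tfrac{1}{2}$, because $X_i$ takes only the values $0$ and $1$.

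There is no real obstacle here; the argument is a short and standard application of independence. The only point requiring a modicum of care is the measurability-implies-independence step, but this is an immediate consequence of the grouping property of independence for iid families (any measurable function of one disjoint block is independent of any measurable function of a disjoint block). Everything else is elementary arithmetic with probabilities of disjoint events.
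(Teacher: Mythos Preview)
Your proof is correct and follows essentially the same line as the paper's own argument: decompose $\{X_i = \xi_i\}$ into the two cases $X_i = \xi_i = 0$ and $X_i = \xi_i = 1$, factor each using the independence of $X_i$ and $\xi_i$, and sum using $\mathbb{P}(\xi_i = 0) = \mathbb{P}(\xi_i = 1) = \tfrac{1}{2}$. The only difference is that you spell out the justification for independence, which the paper simply asserts in the sentence preceding the proposition.
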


\begin{proof}
  \begin{align*}
    \mathbb{P}(X_i = \xi_i) &= \mathbb{P}(\{X_i = 0\} \cap \{\xi_1 = 0\}) + \mathbb{P} (\{X_i = 1\} \cap \{\xi_1 = 1\}) \\
    &= \mathbb{P}(X_i = 0) \mathbb{P}(\xi_i = 0) + \mathbb{P}(X_i = 1) \mathbb{P}(\xi_i=1)  && \text{(by independence)} \\
    &=  \mathbb{P}(X_i=0) \cdot \frac{1}{2} + \mathbb{P}(X_i = 1) \cdot \frac{1}{2} \\
    &= \frac{1}{2}(\mathbb{P}(X_i = 0) + \mathbb{P}(X_i = 1)) = \frac{1}{2}.
  \end{align*}
\end{proof}

Note that this assumes nothing about the distribution of $X_i$, and applies even if they have adopted a strategy that biases their guesses toward a particular color.

So if we let $Z_i = \mathbbm{1}_{\{X_i = \xi_i\}}$ be the indicator of the event that player $i$ guesses correctly (that is, $1$ if they guess correctly and $0$ if they do not), we have $\mathbb{P}(Z_i = 0) = \mathbb{P}(Z_i = 1) = \frac{1}{2}$.   It is important to note that the $Z_i$ need not be independent of each other.  Even so, letting $\overline{Z} = \frac{1}{n} (Z_1 + \dots + Z_n)$ be the fraction of players who guess correctly, by linearity of expectation we still have:
\begin{proposition}\label{EF-half}
  $\mathbb{E}[\overline{Z}] = \frac{1}{2}$.
\end{proposition}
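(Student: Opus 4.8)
The plan is to invoke linearity of expectation, which is precisely the tool that sidesteps the dependence issue flagged in the remark just above. First I would reduce to the expectation of a single indicator: since $Z_i = \mathbbm{1}_{\{X_i = \xi_i\}}$, we have $\mathbb{E}[Z_i] = \mathbb{P}(Z_i = 1) = \mathbb{P}(X_i = \xi_i)$, and Proposition \ref{fifty-fifty} tells us this equals $\frac{1}{2}$ for each $i$, irrespective of the strategy.

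Then I would apply linearity of expectation directly to the average $\overline{Z} = \frac{1}{n}(Z_1 + \dots + Z_n)$, which gives $\mathbb{E}[\overline{Z}] = \frac{1}{n}\sum_{i=1}^n \mathbb{E}[Z_i] = \frac{1}{n} \cdot n \cdot \frac{1}{2} = \frac{1}{2}$. That is the entire argument.

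The conceptual point worth stressing—and the reason this is not a mere restatement of Proposition \ref{fifty-fifty}—is that linearity of expectation holds for any finite sum of integrable random variables with no hypothesis on their joint law. So although the $Z_i$ may be wildly correlated (and indeed the clever strategies to come exploit exactly such correlations to push the \emph{number} of correct guessers well away from its typical value), the \emph{expected} fraction of correct guessers is rigidly pinned at $\frac{1}{2}$ for every admissible strategy. There is no genuine obstacle here; the only thing to guard against is the reflex to assume independence is required, when in fact it is not.
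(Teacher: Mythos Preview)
Your argument is correct and is exactly the paper's own: use Proposition~\ref{fifty-fifty} to get $\mathbb{E}[Z_i]=\frac{1}{2}$ for each $i$, then apply linearity of expectation to $\overline{Z}=\frac{1}{n}\sum_i Z_i$. The paper states this in a single line, but the content is identical.
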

We see, then, that no clever strategy can improve the chances for any given player to guess correctly, nor can it improve the average number of correct guesses.  What a strategy \emph{can} do is to create correlations between those correct guesses.  Before seeing some examples, let us first find some bounds on what such strategies could accomplish.

Recalling that Markov's inequality gives us $\mathbb{P}(\overline{Z} \ge \alpha) \le \frac{1}{\alpha} \mathbb{E}[\overline{Z}] = \frac{1}{2 \alpha}$ for all $\alpha > 0$, taking  $\alpha=1$ we have:
\begin{proposition}\label{F1}
  $\mathbb{P}(\overline{Z}=1) \le \frac{1}{2}$.  That is, no strategy can provide more than a 50\% chance that all players guess correctly.
\end{proposition}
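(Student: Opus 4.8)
The plan is to apply Markov's inequality, exactly as set up in the sentence immediately preceding the statement, and then to exploit the fact that $\overline{Z}$ is bounded above by $1$. First I would observe that $\overline{Z} = \frac{1}{n}(Z_1 + \dots + Z_n)$ is a nonnegative random variable, being an average of the $\{0,1\}$-valued indicators $Z_i$, so that Markov's inequality indeed applies. Taking the threshold $\alpha = 1$ and inserting $\mathbb{E}[\overline{Z}] = \frac{1}{2}$ from Proposition \ref{EF-half}, this gives $\mathbb{P}(\overline{Z} \ge 1) \le \frac{1}{2}$.

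The one point deserving a word of justification is the passage from the event $\{\overline{Z} \ge 1\}$ to the event $\{\overline{Z} = 1\}$. Since each $Z_i$ takes values in $\{0,1\}$, their average satisfies $\overline{Z} \le 1$ with certainty, so the events $\{\overline{Z} \ge 1\}$ and $\{\overline{Z} = 1\}$ coincide. Combining this with the Markov bound yields $\mathbb{P}(\overline{Z} = 1) = \mathbb{P}(\overline{Z} \ge 1) \le \frac{1}{2}$, which is precisely the claim.

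I do not anticipate any genuine obstacle: the whole argument is a one-line consequence of Markov's inequality together with the boundedness of $\overline{Z}$. The only thing to be careful about is not to overlook the boundedness observation, without which Markov would merely control $\mathbb{P}(\overline{Z} \ge 1)$ rather than the probability of the exact value $\overline{Z} = 1$. The interpretive remark — that no strategy can guarantee better than even odds of all players guessing correctly — then follows immediately from the displayed inequality.
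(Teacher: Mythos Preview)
Your proof is correct and follows exactly the paper's approach: apply Markov's inequality at $\alpha = 1$ using $\mathbb{E}[\overline{Z}] = \frac{1}{2}$, then identify $\{\overline{Z} \ge 1\}$ with $\{\overline{Z} = 1\}$ via the trivial bound $\overline{Z} \le 1$. The only difference is that you make the boundedness step explicit, which the paper leaves implicit.
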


And taking arbitrary $\alpha > \frac{1}{2}$ we get:
\begin{proposition}[cf.~{\cite[Theorem 3]{ht-intel}},{\cite[Theorem 2.3.1]{ht-book}} ]\label{Fa}
  $\mathbb{P}(\overline{Z} \ge \alpha) < 1$ for all $\alpha > \frac{1}{2}$.  That is, no strategy can guarantee that more than half of players guess correctly.
\end{proposition}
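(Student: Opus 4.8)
The plan is to read off the result directly from Markov's inequality, exactly as the displayed computation preceding the statement anticipates. First I would recall that Proposition~\ref{EF-half} gives $\mathbb{E}[\overline{Z}] = \tfrac{1}{2}$ for \emph{every} strategy, so the hypothesis of Markov's inequality (namely that $\overline{Z}$ is a nonnegative random variable with finite mean) is satisfied uniformly over all strategies. Applying Markov at the given threshold $\alpha > 0$ then yields $\mathbb{P}(\overline{Z} \ge \alpha) \le \frac{1}{\alpha}\mathbb{E}[\overline{Z}] = \frac{1}{2\alpha}$.

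The only remaining point is to observe that this bound is strictly less than $1$ in the regime of interest: if $\alpha > \tfrac{1}{2}$ then $2\alpha > 1$, hence $\frac{1}{2\alpha} < 1$, and combining with the Markov estimate gives
\[
  \mathbb{P}(\overline{Z} \ge \alpha) \le \frac{1}{2\alpha} < 1.
\]
Interpreting the event $\{\overline{Z} \ge \alpha\}$ as ``at least an $\alpha$-fraction of players guess correctly,'' this says precisely that for any $\alpha$ exceeding one half, no strategy can make this event certain; there is always positive probability that fewer than an $\alpha$-fraction guess correctly.

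Since the entire argument is a one-line consequence of a bound already in hand, I do not anticipate any genuine obstacle. The only subtlety is that Markov supplies the non-strict inequality $\le \frac{1}{2\alpha}$, so the strict conclusion $< 1$ must come from the strictness of $\frac{1}{2\alpha} < 1$ rather than from Markov itself; this is automatic once $\alpha > \tfrac{1}{2}$. It is worth noting that the bound degrades as $\alpha \downarrow \tfrac{1}{2}$, with $\frac{1}{2\alpha} \to 1$, so this simple method gives no uniform gap below $1$ as $\alpha$ approaches one half — a limitation that foreshadows the more delicate analysis needed for the sharper asymptotic statements later in the paper.
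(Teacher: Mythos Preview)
Your argument is correct and matches the paper's own reasoning exactly: the result is obtained by applying Markov's inequality to $\overline{Z}$ (using $\mathbb{E}[\overline{Z}]=\tfrac{1}{2}$ from Proposition~\ref{EF-half}) and noting that $\tfrac{1}{2\alpha}<1$ when $\alpha>\tfrac{1}{2}$.
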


At first glance, these bounds may seem very far from sharp.  For instance, a naive ``guess randomly'' strategy, in which the $X_i$ are mutually independent (or simply constant), has $\mathbb{P}(\overline{Z}=1) = \frac{1}{2^n}$.
  Moreover, under this strategy we have $\mathbb{P}(\overline{Z} \ge \alpha) < 1$ for all $\alpha > 0$: there is no guarantee that \emph{any} players guess correctly.  They could all guess wrong, which happens with probability $\frac{1}{2^n}$.

However, with more clever strategies we can do much better.  In fact, both of the above bounds (Propositions \ref{F1} and \ref{Fa}) are in fact sharp!

\begin{definition}[cf.~\cite{winkler}]\label{even-odd}
  The \emph{even-odd strategy} is defined by
  \begin{equation*}
    X_i = (\xi_1 + \dots + \xi_{i-1} + \xi_{i+1} + \dots + \xi_n) \bmod 2.
  \end{equation*}
  That is, each player guesses ``white'' if they see an odd number of white hats on the other players, and ``black'' otherwise.  Equivalently, each player guesses the color that their hat would be if the total number of white hats were even.
\end{definition}

\begin{proposition}\label{even-odd-result}
  Under the even-odd strategy, $P(\overline{Z}=1) = \frac{1}{2}$.
\end{proposition}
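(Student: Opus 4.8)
The plan is to show that under the even-odd strategy the players' outcomes are perfectly coupled: either everyone guesses correctly or everyone guesses incorrectly, with the deciding factor being the parity of the total number of white hats. Once this all-or-nothing dichotomy is established, the claim reduces to computing the probability that this total parity is even.

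First I would introduce the full sum $S = \xi_1 + \dots + \xi_n$ and rewrite player $i$'s guess as $X_i \equiv S - \xi_i \pmod 2$, since $X_i$ omits only the term $\xi_i$. Player $i$ guesses correctly exactly when $X_i = \xi_i$, that is, when $S - \xi_i \equiv \xi_i \pmod 2$. Because $\xi_i \in \{0,1\}$, this simplifies to $S \equiv 2\xi_i \equiv 0 \pmod 2$. The crucial observation is that this condition makes no reference to $i$: player $i$ is correct if and only if $S$ is even, and this is the \emph{same} event for every player simultaneously. Hence all the events $\{Z_i = 1\}$ coincide with the single event $\{S \equiv 0 \pmod 2\}$. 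In particular $\{\overline{Z} = 1\}$ (all players correct) equals $\{S \equiv 0 \pmod 2\}$, and $\{\overline{Z} = 0\}$ (all players wrong) is its complement; no intermediate value of $\overline{Z}$ is possible.

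It then remains to verify that $\mathbb{P}(S \equiv 0 \pmod 2) = \frac{1}{2}$. I would condition on $\xi_1, \dots, \xi_{n-1}$: once these are fixed, the parity of $S$ is toggled by the value of $\xi_n$, which is an independent fair coin flip, so exactly one of its two possible values makes $S$ even. Thus the conditional probability that $S$ is even is $\frac{1}{2}$ regardless of the other hats, and averaging over $\xi_1, \dots, \xi_{n-1}$ gives $\mathbb{P}(S \equiv 0 \pmod 2) = \frac{1}{2}$, which yields $\mathbb{P}(\overline{Z} = 1) = \frac{1}{2}$.

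I do not anticipate a genuine obstacle here: the entire content of the argument lies in the algebraic observation that the even-odd strategy collapses all $n$ individual correctness events into a single global parity event. Once that coupling is seen, the probability computation is routine. (As a sanity check, this also confirms that Proposition \ref{F1} is sharp, since it achieves the bound $\mathbb{P}(\overline{Z}=1) = \frac{1}{2}$ exactly.)
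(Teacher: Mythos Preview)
Your argument is correct and follows essentially the same approach as the paper: establish the all-or-nothing dichotomy governed by the parity of the total number of white hats, then compute that this parity is even with probability $\tfrac{1}{2}$. The paper is terser---it invokes ``symmetry'' for the parity computation and cites Proposition~\ref{EF-half} to explain the complementary outcome $\overline{Z}=0$---whereas you spell out the algebra and use a conditioning argument, but the underlying idea is the same.
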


\begin{proof}
  If the total number of white hats is indeed even, then all players guess correctly.  It is clear from symmetry that this occurs with probability $\frac{1}{2}$.  On the other hand, if the total number of white hats is odd, then every player guesses incorrectly, so that $\overline{Z}=0$; this is inevitable in view of the expectation $\mathbb{E}[\overline{Z}] = \frac{1}{2}$ (Proposition \ref{EF-half}).
\end{proof}

Now we turn to the bound of Proposition \ref{Fa}.

\begin{definition}[see \cite{winkler}]\label{pairs}
  Suppose that $n$ is even.  The \emph{pairs} strategy is defined by
  \begin{equation*}
    X_{2j-1} = \xi_{2j}, \qquad 
    X_{2j} = 1 - \xi_{2j-1}
  \end{equation*}
  for each $j = 1, \dots, \frac{n}{2}$,  That is, the players group themselves into pairs: Player 1 with Player 2, Player 3 with Player 4, and so on.  Within the first pair, Player 1 guesses that his hat color is the same color as the hat he sees on Player 2, and Player 2 guesses that her hat is the \emph{opposite} of Player 1's.  The other pairs play similarly.
\end{definition}

\begin{proposition}
  Under the pairs strategy, $\overline{Z} = \frac{1}{2}$ surely (i.e. for every possible hat assignment).
\end{proposition}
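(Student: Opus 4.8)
The plan is to analyze the outcome within a single pair $(2j-1, 2j)$ and show that, regardless of the colors assigned to these two players, exactly one of the two guesses correctly. Summing over the $n/2$ pairs then produces exactly $n/2$ correct guesses in total, so that $\overline{Z} = \frac{1}{2}$ for every hat assignment.

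First I would translate the ``correct guess'' condition for each of the two players into a statement purely about $\xi_{2j-1}$ and $\xi_{2j}$. Player $2j-1$ guesses correctly precisely when $X_{2j-1} = \xi_{2j-1}$, which by the definition $X_{2j-1} = \xi_{2j}$ reduces to $\xi_{2j-1} = \xi_{2j}$ --- that is, the two hats in the pair match. Player $2j$ guesses correctly precisely when $X_{2j} = \xi_{2j}$, which by $X_{2j} = 1 - \xi_{2j-1}$ reduces to $\xi_{2j-1} \neq \xi_{2j}$ --- that is, the two hats differ. The key observation is that for any assignment of two $\{0,1\}$-valued colors, exactly one of the mutually exclusive events $\{\xi_{2j-1} = \xi_{2j}\}$ and $\{\xi_{2j-1} \neq \xi_{2j}\}$ occurs. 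Hence within each pair exactly one of $Z_{2j-1}, Z_{2j}$ equals $1$ and the other equals $0$, giving $Z_{2j-1} + Z_{2j} = 1$ identically. Adding over $j = 1, \dots, \frac{n}{2}$ yields $Z_1 + \dots + Z_n = \frac{n}{2}$ for every outcome, so $\overline{Z} = \frac{1}{2}$ surely.

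There is no genuine obstacle here: the argument is a deterministic case analysis carried out pairwise, and neither the independence nor the distribution of the $\xi_i$ is used at all --- the conclusion holds pointwise on $\Omega$, not merely in expectation or with high probability. The only point requiring care is the bookkeeping check that the two complementary conditions actually partition all color configurations of a pair, which is immediate since $\{0,1\}$ has exactly two elements. It is worth remarking that this is consistent with, and in fact saturates, the expectation bound $\mathbb{E}[\overline{Z}] = \frac{1}{2}$ of Proposition \ref{EF-half}, while simultaneously showing the bound of Proposition \ref{Fa} to be sharp for every $\alpha > \frac{1}{2}$.
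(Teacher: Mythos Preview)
Your proof is correct and follows essentially the same approach as the paper: a pairwise case analysis showing that exactly one member of each pair guesses correctly, then summing over pairs. The paper's version is slightly less formal but identical in substance.
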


\begin{proof}
  Consider Players 1 and 2.  If their hats are the same color, then Player 1 guesses correctly and Player 2 guesses incorrectly.  If their hats are opposite colors, then the reverse is true.  In either case, exactly one player of the pair guesses correctly, and this same logic applies to every other pair.  So we are guaranteed that exactly half of the players guess correctly.
\end{proof}

One way to think about these results is that, although Proposition \ref{EF-half} tells us that on average, half the players must inevitably guess wrong, we still have the freedom to create correlations or anticorrelations between the correctness of their guesses.  Thus we can ensure that correct guesses are either clumped together, so that we can get a lot of them at once, or spread apart, so that we never have too few at the same time.

\section{Infinite hat-guessing with the axiom of choice}\label{sec:AC}

We turn now to the case of an \emph{infinite} number of players.  Specifically, we suppose that there are countably many players, indexed by the positive integers $\mathbb{N} = \{1,2,3, \dots\}$.  As before, $\xi_i$ denotes the hat color given to player $i$.

In this section, we will work with the axiom of choice ($\mathsf{AC}$), and work with strategies that need not be measurable.  As such, we cannot really do any probability theory here.  However, notationally we will still let $X_i$ denote the guess of player $i$, and require that $X_i = f_i(\xi_1, \dots, \xi_{i-1}, \xi_{i+1}, \dots)$ where $f_i : \{0,1\}^{\mathbb{N} \setminus \{i\}} \to \{0,1\}$ is an \emph{arbitrary} function (not necessarily measurable in any sense).  A \emph{strategy} is thus a sequence of such functions $f_i$.  In this setting, we can no longer speak in general of the probability of a guess being correct, as the set of hat assignments resulting in a correct guess need not be measurable, and so may not have a well-defined probability.  But we can still talk about results that occur ``surely''; i.e. for every possible hat assignment in $\{0,1\}^{\mathbb{N}}$.

In the case of finitely many players, we showed that it is impossible to guarantee that more than half of the players guess correctly (Proposition \ref{Fa}).  In the infinite case, using the axiom of choice, this limitation can be demolished.

\begin{theorem}[Attributed to Yuval Gabay and Michael O'Connor; see {\cite[Theorem 4]{ht-intel}}]
  Assuming the axiom of choice, there exists a strategy under which, surely, all but finitely many players guess correctly.
\end{theorem}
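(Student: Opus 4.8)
The plan is to use the axiom of choice to equip each player with advance knowledge of a ``reference'' hat assignment that agrees with the true one in all but finitely many places. First I would define an equivalence relation $\sim$ on the space of hat assignments $\{0,1\}^{\mathbb{N}}$, declaring $x \sim y$ precisely when $x_i = y_i$ for all but finitely many $i$; that this is an equivalence relation is routine. The key structural fact is that this relation is insensitive to any single coordinate: if $x$ and $y$ differ only in the $i$-th place, then $x \sim y$. Consequently, whatever player $i$ sees---namely all coordinates except the $i$-th---already determines the equivalence class $[\xi]$ of the true assignment $\xi = (\xi_1, \xi_2, \dots)$, since filling in the missing coordinate with, say, $0$ produces a sequence in the same class as $\xi$.

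Next, invoking $\mathsf{AC}$, I would choose a system of representatives: a function $r$ that assigns to each equivalence class $C$ a single element $r(C) \in C$. This is exactly the step where the full strength of choice enters, as there is no canonical or definable way to select these representatives; the players simply agree on $r$ in advance. The strategy is then immediate to describe: player $i$ computes the class $[\xi]$ (which, as noted, they can do from the hats they see), looks up the agreed representative $s = r([\xi])$, and guesses $X_i = s_i$, the $i$-th coordinate of that representative. In the language of the problem this defines admissible functions $f_i$, since the output depends only on the coordinates other than the $i$-th.

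Finally I would verify correctness. By construction the representative $s = r([\xi])$ lies in the same class as the true assignment $\xi$, so $s \sim \xi$; by the definition of $\sim$ this means $s_i = \xi_i$ for all but finitely many $i$. Since player $i$ guesses $s_i$, the set of players who guess incorrectly is exactly $\{i : s_i \neq \xi_i\}$, which is finite. As this holds for every possible assignment $\xi \in \{0,1\}^{\mathbb{N}}$, the conclusion is sure, not merely almost sure.

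The main obstacle, or rather the conceptual crux, is the verification that each player has enough information to identify the equivalence class despite not seeing their own hat; this rests entirely on the single-coordinate invariance of $\sim$, and once that is in hand the remainder is bookkeeping. The reliance on $\mathsf{AC}$ to produce $r$ is unavoidable and, as the later sections emphasize, is precisely what makes the resulting strategy non-measurable.
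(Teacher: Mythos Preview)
Your proof is correct and follows essentially the same approach as the paper: define the finite-difference equivalence relation on $\{0,1\}^{\mathbb{N}}$, use $\mathsf{AC}$ to select a representative from each class, and have each player guess the corresponding coordinate of the representative, relying on the single-coordinate invariance of $\sim$ to ensure each player can identify the correct class. The only cosmetic difference is that you phrase the selection as a choice function $r$ while the paper speaks of a transversal set $A$, which amounts to the same thing.
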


\begin{proof}
  Consider the relation $\sim$ on the set $\{0,1\}^{\mathbb{N}}$ of $\{0,1\}$-valued sequences, defined by $\omega \sim \omega'$ if $\omega_i = \omega'_i$ for all but finitely many indices $i$.  It is clear that $\sim$ is an equivalence relation and thus partitions $\{0,1\}^{\mathbb{N}}$ into equivalence classes.  Using the axiom of choice, form a set $A \subseteq \{0,1\}^{\mathbb{N}}$ which contains exactly one element of each equivalence class.

  Now for each $i$, define the strategy $f_i$ of Player $i$ by setting
  $f_i(\xi_1, \dots, \xi_{i-1}, \xi_{i+1}, \dots) = \omega_i$, where $\omega$ is the unique element of $A$ with $\omega \sim (\xi_1, \dots, \xi_{i-1}, \ast, \xi_{i+1}, \dots)$.  Here $\ast$ could be taken to be either $0$ or $1$ without changing the resulting $\omega$.

  In words, let $\xi = (\xi_1, \xi_2, \dots)$ denote the sequence of hats assigned to the players.  Since each player $i$ sees all but one entry of $\xi$, they can correctly identify its equivalence class, and thus the agreed-upon representative $\omega \in A$ of that class.  Player $i$ then guesses that their hat is of color $\omega_i$.  By construction, $\omega \sim \xi$, so $\omega_i = \xi_i$ for all but finitely many $i$; that is, all but finitely many players guess correctly.
\end{proof}

To the author's mind, the Gabay--O'Connor theorem is an astounding result, comparable to the  Banach--Tarski paradox among the most perplexing consequences of the axiom of choice.  Intuitively the game with infinitely many players ought to be, if anything, \emph{harder} to win than with finitely many players.  So how is it possible that we can do so much better in the infinite case?  In the finite case, we easily proved (Proposition \ref{F1}) that the maximum fraction of players who can be guaranteed to guess right is $\frac{1}{2}$, yet in the infinite case, somehow we are able to push this fraction to $1$.  It seems patently absurd.

Our main result in Section \ref{sec:meas}, together with the discussion in Section \ref{sec:alt}, will show that rejecting the axiom of choice would restore order and reason to the universe, or at least to this hat-guessing game, and re-impose an upper bound of $\frac{1}{2}$, in an appropriate sense.  This rules out all such impossibly successful strategies in what we think is a rather strong quantitative manner.

\begin{remark}
  The idea of the Gabay--O'Connor theorem actually goes back much further, to an American Mathematical \textsc{Monthly} problem from 1966 proposed (and solved) by F.~Galvin \cite{galvin-silverman-solution, thorp-solution}, in the following guise: find a function $f : \mathbb{R}^{\mathbb{N}} \to \mathbb{R}$ such that for all sequences $(x_1, x_2, \dots) \in \mathbb{R}^{\mathbb{N}}$, for all but finitely many $n$ we have $x_n = f(x_{n+1}, x_{n+2}, \dots)$.  This can be seen as a hat-guessing game with ``limited visibility'', in that each player can only see the hats of higher-numbered players, and with uncountably many hat colors; perhaps surprisingly, the construction described above goes through without change in this apparently more difficult setting.  (See Section \ref{sec:limited-vis} below for more on the limited visibility game.)

  Interestingly, the first solution published in the \textsc{Monthly} \cite{galvin-silverman-solution} was flawed: in terms of hat guessing, it purported to present a strategy that ensured that \emph{at most one} player guesses incorrectly.  In fact, no such strategy is possible.  The guarantee of the Gabay--O'Connor strategy that only \emph{finitely many} players guess incorrectly is in some sense best possible: it cannot be improved to \emph{boundedly many}.  Indeed, let a strategy be fixed, and for any $n$, let us restrict our attention to the $2^n$ hat assignments in which the hat colors of players $1, \dots, n$ are unrestricted, but all remaining players receive black hats.  Then players $1, \dots, n$ are effectively just playing a finite version of the hat game among themselves; there are no measurability concerns because only a finite number of outcomes are under consideration.  We then see, just as in Proposition \ref{Fa}, that there must be at least one of these $2^n$ assignments in which at least $\frac{n}{2}$ players guess incorrectly.  
\end{remark}

In a different direction, if instead of ``spreading'' successful guesses we want to ``clump'' them, we have the following result:

\begin{theorem}[Attributed to Hendrik Lenstra; see {\cite[Theorem 5]{ht-intel}}]\label{evenodd-ac}
  Assuming the axiom of choice, there exists a strategy under which, surely, either all players guess correctly, or all players guess incorrectly.
\end{theorem}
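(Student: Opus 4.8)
The plan is to mimic the Gabay--O'Connor construction, but instead of having the chosen representative dictate each player's guess directly, I would have it encode a single global ``parity correction'' bit that every player applies simultaneously, in the spirit of the finite even-odd strategy of Definition \ref{even-odd}. The difficulty is that with infinitely many players we cannot form the sum $\xi_1 + \xi_2 + \cdots$ and reduce it mod $2$; there is no well-defined total parity. So I would use the choice set $A$ to manufacture a \emph{relative} parity instead.

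Concretely, I would again partition $\{0,1\}^{\mathbb{N}}$ by the finite-agreement equivalence relation $\sim$ from the previous theorem and fix, by $\mathsf{AC}$, a set $A$ containing exactly one representative from each class. Given the true assignment $\xi$, every player $i$ sees all coordinates except the $i$th and can therefore identify the equivalence class $[\xi]$ and its representative $\omega \in A$; moreover, since $\omega \sim \xi$, the two sequences differ in only finitely many coordinates, so the quantity
\begin{equation*}
  d(\xi) := \#\{\, j : \xi_j \neq \omega_j \,\} \bmod 2
\end{equation*}
is a well-defined element of $\{0,1\}$. The key observation is that player $i$ can compute everything relevant about $d(\xi)$ except the possible contribution of coordinate $i$ itself: knowing $\xi_j$ and $\omega_j$ for all $j \neq i$, player $i$ can evaluate $d_i := \#\{\, j \neq i : \xi_j \neq \omega_j \,\} \bmod 2$. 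I would then prescribe the strategy $f_i(\xi_1,\dots,\xi_{i-1},\xi_{i+1},\dots) = \omega_i \oplus d_i$, that is, player $i$ guesses the representative's $i$th entry, corrected by the parity of the disagreements they can see.

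The main step is then to verify that this strategy has the claimed all-or-nothing behavior. A short case analysis on the global parity $d(\xi)$ should do it: if $d(\xi)=0$, then for each $i$ the ``hidden'' coordinate contributes $\mathbbm{1}_{\{\xi_i \neq \omega_i\}}$ to the total, and a direct computation should show the guess $\omega_i \oplus d_i$ equals $\xi_i$ for every $i$ simultaneously; if $d(\xi)=1$, the same computation should flip every guess, so that every player is wrong. I expect the only real obstacle to be bookkeeping the $\oplus$ algebra carefully: one must check that $d_i = d(\xi) \oplus \mathbbm{1}_{\{\xi_i \neq \omega_i\}}$ and then confirm that $\omega_i \oplus d_i = \xi_i$ exactly when $d(\xi) = 0$, uniformly in $i$. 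Since $\omega$ is common to all players (it depends only on the class $[\xi]$, which each player computes identically) and the disagreement set is finite, the correction bit is globally consistent, which is precisely what forces the outcome to be all-correct or all-incorrect rather than mixed.
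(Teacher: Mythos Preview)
Your proposal is correct, but it takes a different route from the paper's proof. The paper works in the vector space $\mathbb{F}_2^{\mathbb{N}}$: it extends the standard unit vectors $\{\delta^i\}$ to a Hamel basis via Zorn's lemma, and then defines an $\mathbb{F}_2$-linear functional $g$ sending each $\delta^i$ to $1$. This $g$ serves as a global ``parity'' of the full assignment $\xi$, and player $i$ guesses the value that would make $g(\xi)=0$. Your construction instead recycles the Gabay--O'Connor choice set $A$ and defines parity \emph{relative to the representative} $\omega$, via the finite disagreement count $d(\xi)=\#\{j:\xi_j\ne\omega_j\}\bmod 2$. Both constructions yield a function $\{0,1\}^{\mathbb{N}}\to\{0,1\}$ with the crucial property that toggling any single coordinate toggles the value, which is all the even-odd argument needs; the paper even remarks that other $\mathsf{AC}$ routes to such parity functions exist. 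Your version is arguably more economical---it needs no linear-algebra machinery and reuses the transversal already built for Gabay--O'Connor---while the paper's version yields a genuinely $\mathbb{F}_2$-linear extension of ordinary parity, which is conceptually cleaner and makes the analogy with Definition~\ref{even-odd} more direct. Your verification sketch is fine; the identity $\omega_i\oplus\mathbbm{1}_{\{\xi_i\ne\omega_i\}}=\xi_i$ immediately gives that player $i$'s guess equals $\xi_i\oplus d(\xi)$, from which the all-or-nothing conclusion is immediate.
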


This result seems less problematic on its face than the Gabay--O'Connor theorem.  After all, it does not seem to violate the spirit of Proposition \ref{Fa}, and indeed we found a strategy in the finite case which achieves just the same result: namely, the even-odd strategy of Definition \ref{even-odd}.  However, if we try to extend the even-odd strategy to the infinite case, we see a problem: it somehow requires us to be able to classify \emph{infinite} subsets of $\mathbb{N}$ as having an ``even'' or ``odd'' number of elements, in such a way that adding or removing one element from a set should switch its parity.  There is certainly no obvious elementary way to do so.  Still, we can do it using the axiom of choice.

\begin{proof}
  View the set of hat colors $\{0,1\}$ as the finite field $\mathbb{F}_2$,  and view the set of possible hat assignments $\{0,1\}^{\mathbb{N}}$ as the infinite-dimensional vector space $\mathbb{F}_2^{\mathbb{N}}$ over $\mathbb{F}_2$.  For each $i$, let $\delta^i \in \mathbb{F}_2^{\mathbb{N}}$ be the sequence defined by $\delta^i_i=1$ and $\delta^i_j = 0$ for $i \ne j$.  Then $\{\delta_i : i \in \mathbb{N}\}$ is a linearly independent set in $\mathbb{F}_2^{\mathbb{N}}$.  Using Zorn's lemma, extend $\{\delta^i\}$ to a basis $B$ for $\mathbb{F}_2^{\mathbb{N}}$, in the sense that every element of $\mathbb{F}_2^{\mathbb{N}}$ can be written uniquely as a \emph{finite} linear combination of elements of $B$ (a so-called Hamel basis).  Fix any function $g : B \to \mathbb{F}_2$ satisfying $g(\delta^i) = 1$ for all $i$; its values on the other elements of $B$ can be anything.  Since $B$ is a basis, $g$ extends uniquely to a linear functional $g : \mathbb{F}_2^{\mathbb{N}} \to \mathbb{F}_2$.

  Now $g(\omega)$ gives us, as desired, a notion of ``parity'' for the possibly infinite  set $\{ i : \omega(i)=1\}$. Indeed, the empty set has even parity (as $g(0)=0$), and the fact that $g$ is $\mathbb{F}_2$-linear and $g(\delta^i) = 1$ implies that adding or removing one element from a set will switch its parity.  In particular, $g$ extends the usual notion of parity for finite sets, though this extension is highly non-unique and in no way canonical.

  Now to define Lenstra's strategy, let
  \begin{equation*}
    f_i(\xi_1, \dots, \xi_{i-1}, \xi_{i+1}, \dots) = g(\xi_1, \dots, \xi_{i-1}, 0, \xi_{i+1}, \dots).
  \end{equation*}
  Just as in the even-odd strategy of Definition \ref{even-odd}, this represents the ``parity'' of the white hats seen by Player $i$, or equivalently, the color their own hat would be if the set of all white hats had even parity ($g(\xi) = 0$).  So if $\xi$ does indeed satisfy $g(\xi)=0$ then all players guess correctly.    On the other hand, if instead we have $g(\xi) = 1$ (odd parity), then by the same token, all players guess incorrectly.
\end{proof}

There are other ways to use $\mathsf{AC}$ to produce parity functions of this kind; see \cite{glr}.

The Gabay--O'Connor and Lenstra strategies show, in a rather vivid way, that the use of the axiom of choice allows the players to succeed at the hat-guessing game to an extent that our intuition would suggest was impossible.  In Section \ref{sec:meas}, we show that \emph{measurable} strategies cannot achieve any such degree of success, and behave much more in line with our intuition.    It follows from this, as we discuss in Section \ref{sec:alt}, that counter-intuitively successful strategies must inherently make use of the ``full strength'' of the axiom of choice, and in that sense, cannot be made ``explicit.''  Indeed, we will see that it is possible to reject $\mathsf{AC}$ and work instead in an alternative, yet still consistent, axiomatic system in which these overly successful strategies simply do not exist.  

\section{Measurable hat guessing via probability theory}\label{sec:meas}

In this section, we continue to study the hat-guessing game with two hat colors and countably many players, following the same setup and notation as in Section \ref{sec:AC}.  However, we shall now impose the restriction that the players may only use \emph{measurable} strategies.  This allows us to make use of probability theory once more, and as we will see, greatly limits the success that the players can achieve.

Specifically, let $\{0,1\}^{\mathbb{N}}$ be equipped with the Borel $\sigma$-field induced by its product topology.  As the product topology is homeomorphic to the Cantor set, which in particular is Polish, this gives us a standard Borel space.  Now let $\mu$ denote the ``coin-flipping'' measure on $\{0,1\}^{\mathbb{N}}$; i.e., the unique Borel probability measure such that the coordinate maps represent iid random variables which are 0 or 1 with probability $\frac{1}{2}$.  We require, for the duration of this section, that each strategy function $f_i : \{0,1\}^{\mathbb{N} \setminus \{i\}} \to \{0,1\}$ is $\mu$-measurable, so that it is $\mu$-a.e. equal to a Borel function.

In particular, if $\{ \xi_i : i = 1, 2, \dots\}$ are iid random variables with $\mathbb{P}(\xi_i = 0) = \mathbb{P}(\xi_1 = 1) = \frac{1}{2}$, defined on some complete probability space $(\Omega, \mathcal{F}, \mathbb{P})$, then each $X_i =  f_i(\xi_1, \dots, \xi_{i-1}, \xi_{i+1}, \dots)$, representing the guess made by player $i$, is actually a random variable.  It is measurable with respect to the $\sigma$-field $\mathcal{G}_i = \sigma(\xi_1, \dots, \xi_{i-1}, \xi_{i+1}, \dots)$ representing the information available to player $i$.  As before, we let $Z_i = \mathbbm{1}_{\{X_i = \xi_i\}}$ be the indicator of the event that Player $i$ guesses correctly, and Proposition \ref{fifty-fifty} shows $\mathbb{E}[Z_i] = \frac{1}{2}$ for all $i$.  We also let $S_n = \sum_{i=1}^n Z_i$ be the number of correct guesses among the first $n$ players, and $\overline{Z}_n = S_n / n$ be the fraction of correct guesses; note $\mathbb{E} [\overline{Z}_n] = \frac{1}{2}$ for all $n$, by linearity of expectation.  We shall focus our attention on the lower asymptotic density $L = \liminf_{n \to \infty} \overline{Z}_n$ of the (random) set of players who guess correctly, and likewise the upper asymptotic density $U = \limsup_{n \to \infty} \overline{Z}_n$.

We can very quickly rule out  any analogue of the Gabay--O'Connor strategy in this setting, because if it were guaranteed that all but finitely many players guess correctly, then $L$ would be the constant $1$.  In particular, we would have $\mathbb{E}[L]=1$.  But Fatou's lemma tells us that $\mathbb{E}[L] \le \liminf_{n \to \infty} \mathbb{E}[\overline{Z}_n] = \frac{1}{2}$.

However, this leaves open the possibility of a strategy that could provide a \emph{positive} probability for all but finitely players to guess right; indeed, $\mathbb{E}[L] \le \frac{1}{2}$ is consistent with that probability being as high as $\frac{1}{2}$.  This is still hard to accept intuitively.  Moreover, $\mathbb{E}[L] \le \frac{1}{2}$ doesn't rule out a Lenstra-type  strategy where either all players guess right or all players guess wrong.

To achieve this, we will prove a stronger result, that excludes both possibilities at once.

\begin{theorem}\label{main}
 $\mathbb{P}(L \le \frac{1}{2} \le U) = 1$.  That is, for every measurable strategy, almost surely, the set of players who guess correctly has lower density at most $\frac{1}{2}$, and upper density at least $\frac{1}{2}$.
\end{theorem}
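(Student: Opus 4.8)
The plan is to recast everything in terms of the indicators $Z_i$ and to exploit that $X_i$ is independent of $\xi_i$. Writing $\eta_i = 2\xi_i-1$ and $x_i = 2X_i-1$, the independence from Proposition \ref{fifty-fifty} gives $Z_i-\tfrac12 = \tfrac12\eta_i x_i$ with $x_i$ measurable in $\{\eta_\ell:\ell\ne i\}$, so that $\overline{Z}_k-\tfrac12 = \tfrac{1}{2k}\sum_{i\le k}\eta_i x_i$; the conclusion is exactly that this average has nonnegative $\limsup$ and nonpositive $\liminf$. Before anything else I would record the soft consequences of $\mathbb{E}[Z_i]=\tfrac12$: linearity gives $\mathbb{E}[\overline{Z}_k]=\tfrac12$ (Proposition \ref{EF-half}), and Fatou applied to $\overline{Z}_k$ and to $1-\overline{Z}_k$ yields $\mathbb{E}[L]\le\tfrac12\le\mathbb{E}[U]$. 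These pin the means but not the almost-sure values.

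Next I would halve the work by a symmetry. Replacing $X_i$ by $1-X_i$ is again a measurable strategy, and it replaces each $Z_i$ by $1-Z_i$, hence turns $(L,U)$ into $(1-U,1-L)$. Consequently ``$U\ge\tfrac12$ a.s. for every measurable strategy'' is equivalent to ``$L\le\tfrac12$ a.s. for every measurable strategy,'' so it suffices to prove one of them, say $\mathbb{P}(U\ge\tfrac12)=1$.

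The key quantitative input is a conditional lower bound. Let $\mathcal{T}_k=\sigma(\xi_{k+1},\xi_{k+2},\dots)$. Conditioned on $\mathcal{T}_k$, the coordinates $\xi_1,\dots,\xi_k$ are still iid fair coins and players $1,\dots,k$ are playing a finite game among themselves, so exactly as in Proposition \ref{fifty-fifty} we get $\mathbb{E}[\overline{Z}_k\mid\mathcal{T}_k]=\tfrac12$. Since $0\le\overline{Z}_k\le1$, a one-line computation then forces, for each $\varepsilon>0$, $\mathbb{P}(\overline{Z}_k\ge\tfrac12-\varepsilon\mid\mathcal{T}_k)\ge c_\varepsilon:=\tfrac{\varepsilon}{1/2+\varepsilon}>0$, and hence, by reverse Fatou for the events $\{\overline{Z}_k\ge\tfrac12-\varepsilon\}$, $\mathbb{P}(U\ge\tfrac12-\varepsilon)\ge c_\varepsilon>0$ for every $\varepsilon>0$. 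This already gives the conclusion in expectation and with positive probability, with no appeal to any zero--one law.

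The hard part will be upgrading ``positive probability'' to ``probability one.'' The clean way to finish is to show that $L$ and $U$ are almost surely constant; combined with $\mathbb{E}[L]\le\tfrac12\le\mathbb{E}[U]$ this gives $L\le\tfrac12\le U$ a.s. immediately. Both $L$ and $U$ are measurable with respect to the tail $\sigma$-field of the sequence $(Z_i)$, since altering finitely many $Z_i$ changes no Ces\`aro $\liminf$ or $\limsup$, so constancy amounts to triviality of that tail field. I expect this to be the crux, and exactly the point where measurability is indispensable: the non-measurable Lenstra strategy of Theorem \ref{evenodd-ac} has $L=U$ equal to a fair coin, so its $(Z_i)$-tail field is \emph{not} trivial, and any correct argument must rule out precisely this ``coherent global parity'' behavior. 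The route I would attempt is to deduce constancy from ergodicity of the group of finite coordinate-flips acting on $(\{0,1\}^{\mathbb N},\mu)$: the flip $\tau_j$ sends $Z_j\mapsto 1-Z_j$ and, by $Z_i\perp\mathcal{G}_i$, sends each $\eta_i x_i$ with $i\ne j$ to $\eta_i x_i'$ where $x_i'$ is again independent of $\eta_i$, so that $U\circ\tau_j$ differs from $U$ only through the Ces\`aro average of the single-coordinate influences $\eta_i(x_i'-x_i)$. The obstacle I anticipate is proving that this influence average tends to $0$ almost surely--equivalently, that flipping a single hat cannot shift the asymptotic density--because a naive count of the influenced players need not be $o(k)$, and one must instead extract genuine cancellation from the fresh signs $\eta_i$; establishing this flip-invariance, and thereby invoking the ergodicity/zero--one law, is where the essential difficulty of the theorem resides.
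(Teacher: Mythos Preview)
Your symmetry reduction and the conditional computation $\mathbb{E}[\overline{Z}_k\mid\mathcal{T}_k]=\tfrac12$ are correct, and the Markov-type bound giving $\mathbb{P}(U\ge\tfrac12-\varepsilon)\ge c_\varepsilon>0$ is fine. The fatal gap is the upgrade step: your plan hinges on showing that $L$ and $U$ are almost surely constant, and this is simply \emph{false}. Corollary~\ref{cor:arbitrary-L-U} in the appendix constructs, for any probability measure $\mu$ on $[0,\tfrac12]\times[\tfrac12,1]$, a measurable strategy with $(L,U)\sim\mu$. For instance, one can let a density-zero set of ``inactive'' players serve as a noise source whose hats select which strategy the remaining players execute; conditional on those hats the active players realize any prescribed $(L_0,U_0)$. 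So the $(Z_i)$-tail field is not trivial in general, and the ergodicity route via finite coordinate flips cannot succeed: flipping one $\xi_j$ can change \emph{every} $X_i$ (and hence every $Z_i$) for $i\ne j$, and as the construction above shows, this can shift $L$ and $U$ by a macroscopic amount. The difficulty you flag---that the ``single-coordinate influence'' on the Ces\`aro average need not vanish---is not a technical obstacle to be overcome; it is a genuine phenomenon that defeats the approach.

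The paper's argument avoids this entirely by working with the \emph{forward} filtration $\mathcal{F}_m=\sigma(\xi_1,\dots,\xi_m)$ rather than the tail. The key observation (Lemma~\ref{Z-G-indep}) is that $Z_i$ is independent of $\mathcal{G}_i$, hence of $\mathcal{F}_m$ for every $m<i$; this gives $\mathbb{E}[\overline{Z}_k\mid\mathcal{F}_m]\le\tfrac{m}{k}+\tfrac12$, and conditional Fatou yields $\mathbb{E}[L\mid\mathcal{F}_m]\le\tfrac12$ for every $m$. Since $\mathcal{F}_m\uparrow\mathcal{F}_\infty$ and $L\in\mathcal{F}_\infty$, L\'evy's martingale convergence theorem gives $\mathbb{E}[L\mid\mathcal{F}_m]\to L$ a.s., so $L\le\tfrac12$ a.s. No zero--one law or constancy claim is needed. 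Your $\mathcal{T}_k$ computation is morally adjacent, but $\mathcal{T}_k$ decreases to the trivial tail of $(\xi_i)$ rather than increasing to $\mathcal{F}_\infty$, so there is no analogous convergence $\mathbb{E}[L\mid\mathcal{T}_k]\to L$ to exploit.
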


In particular, we have that almost surely, infinitely many players guess right and infinitely many players guess wrong.

The proof is based on a basic principle about measurable functions, which, following the language of Littlewood's famous three principles \cite[p.~26]{littlewood}, we might state as: ``A measurable function on $\{0,1\}^{\mathbb{N}}$ \emph{almost} depends only on finitely many coordinates.''  Of course, a strategy which was based only on observing finitely many hats could not possibly achieve the success of the Gabay--O'Connor or Lenstra strategies, which clearly must make essential use of all the hat information observed by the players.  If a measurable strategy must be ``close'' to a strategy of the former kind, then it stands to reason that it cannot achieve such success either.
  
  For us, this principle manifests via the following result about conditional expectations on filtrations, which is usually proved as a corollary of the martingale convergence theorem.  For each $m$, let $\mathcal{F}_m = \sigma(\xi_1, \dots, \xi_m)$ be the $\sigma$-field generated by the first $m$ hat colors, and let $\mathcal{F}_\infty = \sigma(\mathcal{F}_m : m \ge 1) = \sigma(\xi_i : i \ge 1)$ be the $\sigma$-field generated by all of them.  Notice that all random variables under consideration are $\mathcal{F}_\infty$-measurable; this expresses the fact that, once the strategy has been chosen, the results of the game are completely determined by the hat colors.
  
  \begin{theorem}[See for instance {\cite[Theorem 4.6.8]{durrett-pte5}}]\label{martingale-converge}
    If $X$ is any integrable $\mathcal{F}_\infty$-measurable random variable, then $\mathbb{E}[X \mid \mathcal{F}_m] \to X$ as $m \to \infty$, almost surely and in $L^1$.
  \end{theorem}

  We can view $\mathbb{E}[X \mid \mathcal{F}_m] \to X$ as the best estimate we could make (in the sense of mean squared error) of $X$, if all we can see are the first $n$ hats.  If we identify $\{0,1\}^{\mathbb{N}}$ with $[0,1]$ via binary expansions, then Theorem \ref{martingale-converge} can also be derived as a special case of the Lebesgue differentiation theorem.

We begin the proof of Theorem \ref{main} by noting that, if we can observe all hats except the $i$th, it not only gives us no information about the $i$th hat, it also tells us nothing about Player $i$'s chances of guessing correctly.  

\begin{lemma}\label{Z-G-indep}
  $Z_i$ is independent of $\mathcal{G}_i$.
\end{lemma}

\begin{proof}
  Write $Z_i = X_i \xi_i + (1-X_i)(1-\xi_i)$.  Since $X_i \in \mathcal{G}_i$ and $\xi_i$ is independent of $\mathcal{G}_i$, we have by properties of conditional expectation:
  \begin{align*}
    \mathbb{E}[Z_i \mid \mathcal{G}_i] &= X_i \mathbb{E}[\xi_i \mid \mathcal{G}_i] + (1-X_i) \mathbb{E}[1-\xi_i \mid \mathcal{G}_i] \\
      &= X_i \mathbb{E}[\xi_i] + (1-X_i) \mathbb{E}[1-\xi_i] \\
      &= \frac{1}{2} X_i + \frac{1}{2} (1-X_i) \\
      &= \frac{1}{2}.
  \end{align*}
  Since $Z_i$ is 0-1-valued, we have $\mathbb{P}(Z_i = 1 \mid \mathcal{G}_i) = \mathbb{E}[Z_i \mid \mathcal{G}_i] = \frac{1}{2}$, and so $\mathbb{P}(Z_i = 0 \mid \mathcal{G}_i) = \frac{1}{2}$ as well.  Thus the conditional distribution of $Z_i$ given $\mathcal{G}_i$ is the same as its unconditional distribution, which implies independence.
\end{proof}

We can now complete the proof of the main theorem.

\begin{proof}[Proof of Theorem \ref{main}]
  For any $m < i$, we have $\mathcal{F}_m \subseteq \mathcal{G}_i$.  Thus Lemma \ref{Z-G-indep} implies that $Z_i$ is independent of $\mathcal{F}_m$, so that $\mathbb{E}[Z_i \mid \mathcal{F}_m] = \mathbb{E}[Z_i] = \frac{1}{2}$.  
  
  Now for any $m < k$ we have, by linearity,
  \begin{align*}
    \mathbb{E}[\overline{Z}_k \mid \mathcal{F}_m] &= \frac{1}{k} \sum_{i=1}^m \mathbb{E}[Z_i \mid \mathcal{F}_m] + \frac{1}{k} \sum_{i=m+1}^k \mathbb{E}[Z_i \mid \mathcal{F}_m] \\
    &= \frac{1}{k} \sum_{i=1}^m \mathbb{E}[Z_i \mid \mathcal{F}_m] + \frac{1}{2} \frac{k-m}{k} \\
    &\le \frac{m}{k} + \frac{1}{2}
  \end{align*}
  because $Z_i \le 1$ implies $\mathbb{E}[Z_i \mid \mathcal{F}_m] \le 1$ as well.  
  Then by the conditional Fatou lemma, we have almost surely
  \begin{equation*}
    \mathbb{E}[L \mid \mathcal{F}_m] \le \liminf_{k \to \infty} \mathbb{E}[\overline{Z}_k \mid \mathcal{F}_m] \le \liminf_{k \to \infty} \left[ \frac{m}{k} + \frac{1}{2} \right] = \frac{1}{2}.
  \end{equation*}
  Since $\mathbb{E}[L \mid \mathcal{F}_m] \to L$ almost surely by Theorem \ref{martingale-converge}, we conclude that $L \le \frac{1}{2}$ almost surely as well.
\end{proof}

We can view Theorem \ref{main} as a kind of analogue of Proposition \ref{Fa}, which said that, with finitely many players, no strategy can guarantee that more than half of players guess correctly.  With infinitely many players, the statement is stronger: no (measurable) strategy can provide \emph{any} nonzero chance that more than half of players (in the sense of lower density) guess correctly.  This fits with our intuition that the infinite-player game ought to be strictly more difficult to win: adding more
players certainly does not seem to provide any advantages, and moreover, we lose access to elementary properties like parity.

In Section \ref{sec:explicit-strategies} below, we explore some explicit measurable strategies that illustrate the variety of results that remain possible within the bounds established by Theorem \ref{main}, and give some indication of their sharpness.

\begin{remark}
  Theorem \ref{main} is somewhat interesting in view of the fact that the lower and upper asymptotic densities of a subset of $\mathbb{N}$ are not invariant under permutations of $\mathbb{N}$; indeed, for any two subsets of $\mathbb{N}$ which are infinite and have infinite complements, there is a bijection of $\mathbb{N}$ which maps one to the other, even though their densities may be completely different.  So it is certainly possible for the players to achieve different asymptotic densities, in any given outcome, by simply renumbering themselves.    But Theorem \ref{main} shows that the players cannot \emph{statically} renumber themselves to achieve a lower density greater than $1/2$ with positive probability, since such a renumbering would just correspond to a different measurable strategy, subject to the same bounds.   Nor can they do so by \emph{dynamically} renumbering themselves after the game starts; if each player is able to determine their new number as a measurable function of the hats visible to them, then this too is just a different measurable strategy.
\end{remark}

\section{Alternatives to the axiom of choice}\label{sec:alt}

Theorem \ref{main} shows a sharp divide between measurable strategies, whose potential success it restricts,  and highly successful $\mathsf{AC}$-based strategies like Gabay--O'Connor.  For convenience in this section, we will say a strategy is either \emph{reasonably} or \emph{unreasonably successful} according to whether it does or does not satisfy the conclusion of Theorem \ref{main}, that $L \le \frac{1}{2} \le U$ almost surely.  By this definition, the Gabay--O'Connor and Lenstra strategies are unreasonably successful.  So would be a hypothetical ``weak Gabay--O'Connor'' strategy, promising for instance that all but finitely many composite-numbered players (a set of density 1, by the prime number theorem) would guess correctly; or one which did the same for square-free-numbered players (density $\frac{6}{\pi^2} \approx 0.608$
\cite[\S 18.6, Theorem 333]{hardy-wright}).
Theorem \ref{main} thus can be stated as ``measurable strategies can only be reasonably successful,'' or by contrapositive, ``all unreasonably effective strategies are non-measurable''.

We would like to remark at this point that it is already known that the Gabay--O'Connor and Lenstra strategies themselves must be non-measurable; see \cite{ht-intel} and its remark following Theorem 8 (which however omits details).  So, the effect of our Theorem \ref{main} is, in some sense, to enlarge the class of strategies known to be non-measurable.

Even within $\mathsf{ZFC}$, non-measurability has some ``practical'' implications.  For instance, Theorem \ref{main} implies in particular that Borel strategies can only be reasonably successful.  Now, let us imagine that the players hire a remote consultant to develop their strategy for them.  Having worked out the strategy, the consultant must communicate it to the players.  But suppose the consultant is on a limited-bandwidth communication link, and can only send and receive countably many bits of information.  If the strategy is to consist of Borel functions $f_i$, then there is a standard scheme, called \emph{Borel coding}, by which it can indeed be encoded and communicated using countably many bits, and then decoded and used by the players.  But by doing so, as we've shown, they would only be able to achieve \emph{reasonable} success.  If they want to achieve \emph{unreasonable} success,  then either they would need \emph{uncountably} many bits to communicate their strategy, or else they would have to work out a custom encoding scheme --- and then how do they communicate the encoding scheme?  

On the other hand, if we step back from $\mathsf{ZFC}$, then results such as Theorem \ref{main} tell us, roughly speaking, that $\mathsf{AC}$ is \emph{required} in order to prove that unreasonably successful strategies exist at all.  To keep this paper short, we give only an outline of the ideas involved. For further reading, we refer the reader to the last section of \cite{ht-intel}, as well as to \cite[Chapters 6 and 14]{schechter}.

Let $\mathsf{ZF}$ denote the usual Zermelo--Fraenkel axioms of set theory (without the axiom of choice).  Although $\mathsf{ZF}$ itself is inconveniently weak for doing general mathematics, we may append to it the \emph{axiom of dependent choice}  $\mathsf{DC}$, which, roughly speaking, allows one to make countably many choices in an iterative fashion.   This suffices for the standard results of analysis and probability on $\mathbb{R}$ and other Polish and standard Borel spaces; in particular, Theorem \ref{main} is a theorem of $\mathsf{ZF} + \mathsf{DC}$.

Now let $\mathsf{LM}$ be the statement ``all subsets of $\mathbb{R}$ are Lebesgue measurable.''  It is well known that $\{0,1\}^{\mathbb{N}}$ with the coin-flip measure $\mu$ is isomorphic as a measure space to $[0,1]$ with Lebesgue measure.  So $\mathsf{LM}$ implies that all subsets of $\{0,1\}^{\mathbb{N}}$ are $\mu$-measurable, and therefore so are all functions $f : \{0,1\}^{\mathbb{N}} \to \{0,1\}$.  By Theorem \ref{main}, we have in $\mathsf{ZF} + \mathsf{DC}$ that $\mathsf{LM}$ implies ``all hat-guessing strategies (for two colors and countably many players) are only reasonably successful''.

This is all very well, but meaningless unless $\mathsf{ZF} + \mathsf{DC} + \mathsf{LM}$ is actually consistent (free of contradictions); an inconsistent axiomatic system proves \emph{every} statement (as well as its negation).  G\"odel's second incompleteness theorem tells us that we cannot hope to prove its consistency within $\mathsf{ZF}$ --- unless, again, $\mathsf{ZF}$ itself should happen to be inconsistent.

We might hope instead to show that  $\mathsf{ZF} + \mathsf{DC} + \mathsf{LM}$ is \emph{equiconsistent} with $\mathsf{ZF}$, so that any contradictions arising in the former must already have been present in the latter.  As we all believe, or at least dearly hope, that $\mathsf{ZF}$ is in fact consistent, this would be quite satisfactory.  Now, it turns out this is not quite true.  But what is true, by a famous result of Solovay \cite{Solovay1970} improved by Shelah \cite{shelah-solovays-inaccessible},  is that $\mathsf{ZF} + \mathsf{DC} + \mathsf{LM}$ is equiconsistent with $\mathsf{ZFC} + \mathsf{IC}$, where $\mathsf{IC}$ is ``an inaccessible cardinal exists.''  This is still pretty good, as $\mathsf{ZFC} + \mathsf{IC}$ is also generally believed to be consistent; from now on, let us presume that it is.

Putting this all together, we conclude that the existence of unreasonably successful strategies ($\mathsf{USS}$) \emph{cannot} be proved in $\mathsf{ZF}$, nor even in $\mathsf{ZF} + \mathsf{DC}$.  For if it could, then since by Theorem \ref{main} $\mathsf{USS}$ contradicts $\mathsf{LM}$, we would have a contradiction in $\mathsf{ZF} + \mathsf{DC} + \mathsf{LM}$.  Solovay's result would then imply that there is also a contradiction in $\mathsf{ZFC} + \mathsf{IC}$ --- but we believe it to be consistent.

\begin{remark}
  In \cite{ht-intel}, the authors carried out a similar analysis, but instead of the statement $\mathsf{LM}$ that every subset of $\mathbb{R}$ is Lebesgue measurable, they focused on the statement $\mathsf{BP}$ that every subset of $\mathbb{R}$ has the property of Baire (i.e., can be written as the symmetric difference of an open set and a meager set).  This has a slight advantage in that $\mathsf{ZF} + \mathsf{DC} + \mathsf{BP}$ is actually equiconsistent with $\mathsf{ZF}$ \cite{Solovay1970, shelah-solovays-inaccessible}, so we need only assume the consistency of $\mathsf{ZF}$ rather than of $\mathsf{ZFC} + \mathsf{IC}$ as above.  They showed that the existence of the Gabay--O'Connor and Lenstra strategies each imply the existence of a set without the property of Baire, and so, as above, $\mathsf{ZF} + \mathsf{DC}$ is unable to prove the existence of either one of them.  However, this still left open the question of whether slight weakenings of these strategies might be possible to produce in $\mathsf{ZF} + \mathsf{DC}$, which our results here help to address.
\end{remark}

Having shown that $\mathsf{ZF} + \mathsf{DC}$ cannot prove the existence of unreasonably successful strategies, whereas $\mathsf{ZFC}$ can, we see that all such proofs must necessarily make use of $\mathsf{AC}$ in a strong way, beyond what $\mathsf{DC}$ provides.  In this sense, in $\mathsf{ZFC}$, unreasonably successful strategies are examples of what Schechter in \cite{schechter} calls ``intangibles'': mathematical objects which, although their existence can be proved, cannot be constructed in any explicit manner.

As another way to understand this phenomenon, we could decide that we will \emph{not} work in $\mathsf{ZFC}$, but will deliberately reject $\mathsf{AC}$ as an axiom.  Instead, we might adopt $\mathsf{ZF} + \mathsf{DC} + \mathsf{LM}$ as our axioms of set theory.  This axiomatic system is, as we mentioned, adequate for doing general mathematics, and by Solovay's result, is consistent (with appropriate caveats).  Within this alternate mathematical universe, Theorem \ref{main} is still a theorem, but its measurability hypothesis becomes redundant, and it simply states ``Unreasonably successful strategies do not exist''.  Thus, this universe is, at least in this one respect, more in keeping with our intuition about hat guessing than the usual $\mathsf{ZFC}$ universe, which could be a reason to prefer it.  (There are others as well. For instance, in the $\mathsf{ZF} + \mathsf{DC} + \mathsf{LM}$ universe, there is no Banach--Tarski paradox.)

Along similar lines, we mention \cite[Chapter 27]{schechter}, and references therein, which looks at a ``dream universe'' satisfying $\mathsf{ZF} + \mathsf{DC} + \mathsf{BP}$.  By the results of \cite{ht-intel}, this universe also excludes the Gabay--O'Connor and Lenstra strategies.  It has many other appealing theorems besides; for instance,  that every linear mapping between Banach spaces is continuous.

(For a contrary view, however, see \cite{taylor-wagon-paradox}, in which the authors point out that $\mathsf{ZF} + \mathsf{DC} + \mathsf{LM}$ has some counter-intuitive consequences of its own.  For instance, following the familiar Vitali set construction, consider the equivalence relation $\sim$ on $\mathbb{R}$ where $x \sim y$ iff $x - y \in \mathbb{Q}$, and let $\mathbb{R} / \mathbb{Q}$ be the set of equivalence classes.  They show that $\mathsf{ZF} + \mathsf{DC} + \mathsf{LM}$ proves that there is an injection from $\mathbb{R}$ to $\mathbb{R} / \mathbb{Q}$, but none from $\mathbb{R} / \mathbb{Q}$ to $\mathbb{R}$.  That is, there are ``strictly more'' equivalence classes than there are elements of $\mathbb{R}$ itself.  They prove some further results along these lines, and use this as the basis of a spirited argument \emph{in favor} of the full axiom of choice.)

\section{Some explicit strategies for countably many players}\label{sec:explicit-strategies}

In this section, we explore some explicit measurable strategies for the hat game with countably many players, that illustrate what can still be achieved within the bounds of Theorem \ref{main}.  We continue the notation of Section \ref{sec:meas}.  In particular, $L$ and $U$ are respectively the lower and upper asymptotic density of the set of players who guess correctly, viewed as a random subset of $\mathbb{N}$.

\begin{proposition}
  There is a measurable strategy for which $L = U = \frac{1}{2}$ almost surely.
\end{proposition}

\begin{proof}
  The ``pairs'' strategy of Definition \ref{pairs} achieves this.  Indeed, by the strong law of large numbers, so does the trivial strategy under which every player simply guesses ``black.''
\end{proof}

\begin{proposition}\label{L0U1}
  There is a measurable strategy for which $L=0$ and $U=1$ almost surely.
\end{proposition}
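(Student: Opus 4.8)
The plan is to build a strategy that forces the running fraction of correct guessers to oscillate: long stretches where everyone guesses correctly, alternating infinitely often (in both directions) with long stretches where everyone guesses incorrectly, arranged so that whichever kind of stretch occurred most recently dominates the prefix average $\overline{Z}_k$. This is the ``clumping'' idea pushed to the extreme.

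First I would partition $\mathbb{N}$ into consecutive finite blocks $B_1, B_2, \dots$ and have the players in each block $B_j$ run the finite even-odd strategy of Definition \ref{even-odd} among themselves, each guessing the color that would make the number of white hats \emph{within} $B_j$ even. This depends on only finitely many coordinates, so it is certainly measurable. By Proposition \ref{even-odd-result} applied to the sub-game on each block, if the number of white hats in $B_j$ is even then every player in $B_j$ guesses correctly, and otherwise every player in $B_j$ guesses incorrectly. Let $E_j$ be the event that $B_j$ is ``all correct''. Since $E_j$ depends only on $\{\xi_i : i \in B_j\}$ and the blocks are disjoint, the events $E_j$ are independent, each with probability $\tfrac{1}{2}$.

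Next, by the second Borel--Cantelli lemma, since $\sum \mathbb{P}(E_j) = \sum \mathbb{P}(E_j^c) = \infty$, almost surely infinitely many blocks are all-correct and infinitely many are all-wrong. It remains to choose the block sizes so these stretches dominate the average. Writing $N_j = |B_1| + \dots + |B_j|$ for the right endpoint of $B_j$, I would take the sizes to grow fast enough that $N_{j-1}/N_j \to 0$ (for instance $N_j = j!$). Evaluating $\overline{Z}_k$ at $k = N_j$: if $E_j$ occurs, the $|B_j|$ players of block $j$ are all correct, so $\overline{Z}_{N_j} \ge |B_j|/N_j = 1 - N_{j-1}/N_j \to 1$; whereas if $E_j^c$ occurs, every correct guess among players $1, \dots, N_j$ lies in blocks $1, \dots, j-1$, so $\overline{Z}_{N_j} \le N_{j-1}/N_j \to 0$. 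Taking limits along the two almost surely infinite subsequences on which $E_j$, respectively $E_j^c$, holds yields $U = \limsup_k \overline{Z}_k = 1$ and $L = \liminf_k \overline{Z}_k = 0$ almost surely.

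The only place any care is needed is the interplay between the growth rate of the blocks and the density computation: the blocks must grow rapidly enough that $N_{j-1}/N_j \to 0$, so that a single all-correct block drives the prefix average essentially to $1$ and a single all-wrong block drives it essentially to $0$, overwhelming the bounded contribution of all earlier blocks. Everything else---measurability, the per-block parity behavior, and independence across blocks---is immediate from the finite even-odd analysis and the disjointness of the blocks, so there is no real obstacle beyond fixing the growth rate and reading off the two subsequential limits.
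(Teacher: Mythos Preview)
Your argument is correct and essentially identical to the paper's own proof: partition into consecutive blocks, run even-odd within each block, choose block endpoints $N_j$ growing fast enough that $N_{j-1}/N_j \to 0$, and use independence plus the second Borel--Cantelli lemma to get infinitely many all-correct and infinitely many all-wrong blocks. The paper even uses the same factorial growth example.
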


\begin{proof}
  Fix a strictly increasing sequence of positive integers $0 = n_0 < n_1 < n_2 < \dots$, growing sufficiently rapidly so that  $\lim_{k \to \infty} \frac{n_{k-1}}{n_k} = 0$.  For instance, $n_k = k!$ for $k \ge 1$ would do.  Partition the players into consecutive blocks $B_k = \{ n_{k-1} + 1, \dots, n_k\}$.  Within each block, let the players follow the even-odd strategy of Definition \ref{even-odd}, ignoring all the other blocks, so each block is either entirely correct or entirely incorrect.

  Let $A_k$ be the event that everyone in block $k$ guesses correctly.  Since players' guesses are based only on the hats within their own block, the events $A_k$ are independent, and as noted in Proposition \ref{even-odd-result}, we have $\mathbb{P}(A_k) = \frac{1}{2}$.   By the second Borel--Cantelli lemma, we have $\mathbb{P}(A_k \text{ i.o.}) = 1$: almost surely, infinitely many blocks are correct.  Now on the event $A_k$, among the players numbered $1, \dots, n_k$, at most $n_{k-1}$ of them have guessed incorrectly, and so $\overline{Z}_{n_k} \ge 1-\frac{n_{k-1}}{n_k}$.  So on the almost sure event $\{A_k \text{ i.o.}\}$, we have $\overline{Z}_{n_k} \ge 1- \frac{n_{k-1}}{n_k}$ infinitely often, implying that $U = \limsup_{n \to \infty} \overline{Z}_n = 1$.

By the same argument, almost surely, infinitely many blocks are incorrect; and we similarly conclude that under this strategy, $L = 0$ almost surely.
\end{proof}

\begin{proposition}\label{LhalfUu}
  Fix any $\frac{1}{2} \le u \le 1$.  There is a measurable strategy for which $L = \frac{1}{2}$ and $U = u$ almost surely.
\end{proposition}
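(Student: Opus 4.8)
The plan is to build one adaptive measurable strategy by splicing together the two primitives already at hand: the \emph{pairs} strategy of Definition \ref{pairs}, which makes exactly half of any block of players correct and hence drags the running average $\overline{Z}_k$ monotonically toward $\tfrac12$; and the \emph{even-odd} strategy of Definition \ref{even-odd} applied blockwise, which makes an entire block either correct or incorrect, each with probability $\tfrac12$ (Proposition \ref{even-odd-result}), independently across disjoint blocks. The single most important observation is that these can be combined \emph{adaptively}: since each player sees every hat but their own, a player can compute the parity --- hence the success or failure --- of any even-odd block that does not contain them. Thus the roles of later players (which block they join, or whether they switch to pairs) may legitimately depend on the outcomes of all earlier blocks, and this is exactly what breaks the up/down symmetry. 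Indeed, any \emph{non}-adaptive superposition of these symmetric primitives produces excursions of $\overline{Z}_k$ that are symmetric about $\tfrac12$, forcing $L+U=1$; to reach $L=\tfrac12 < u = U$ one must genuinely treat upward and downward movements differently.

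Concretely, I would partition the players into consecutive intervals $I_1, I_2, \dots$ (``rounds''), each chosen long enough for the purposes below. Within $I_n$ the players begin a \emph{ride}: they run even-odd blocks of small constant relative size $\beta_n$, proceeding to the next block so long as every preceding block of the round has been correct, which by the adaptivity above each player can check. As correct blocks accumulate, $\overline{Z}_k$ climbs toward a target $u_n$; the moment a block comes up incorrect the players \emph{bail}, spending the rest of $I_n$ on the pairs strategy, which pulls $\overline{Z}_k$ back toward $\tfrac12$. The crucial asymmetry is that a ride contains at most one downward jump --- the single terminal failed block, of relative size $\beta_n$ --- so the average never falls below $\tfrac12 - O(\beta_n)$, whereas an uninterrupted run of correct blocks can lift it all the way to $u_n$.

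The two conclusions then follow by tuning $\beta_n \downarrow 0$ and the ride length $R_n$ (equivalently the target $u_n \uparrow u$) so that $(1-\beta_n)^{R_n} = 2(1-u_n)$, $\beta_n \to 0$, and $\sum_n 2^{-R_n} = \infty$ hold simultaneously. For $U=u$: the event $S_n$ that round $n$'s first $R_n$ blocks are all correct has probability $2^{-R_n}$, and because distinct rounds use disjoint players with independent hats, the $S_n$ are independent; choosing $R_n \approx \log_2 n$ so that $\sum_n 2^{-R_n}=\infty$, the second Borel--Cantelli lemma gives $S_n$ infinitely often, so $\overline{Z}_k$ reaches $u_n \to u$ infinitely often and $\limsup \ge u$ almost surely, while the cap ensures $\overline{Z}_k$ exceeds $u$ by at most a vanishing amount, giving $\limsup \le u$; hence $U=u$ a.s. For $L=\tfrac12$: taking $\beta_n$ decreasing, the tail bound $\overline{Z}_k \ge \tfrac12 - O(\beta_n)$ gives $\liminf \ge \tfrac12$ surely, while Theorem \ref{main} supplies $\liminf \le \tfrac12$; hence $L=\tfrac12$ a.s.

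I expect the main difficulty to be bookkeeping rather than concept. First, one must verify that the adaptive scheme really is a measurable strategy: the role and guess of each player must be a function of the hats they can see, i.e. all block boundaries and bail decisions up to a given player must be determined by parities of earlier blocks not containing that player. Second, to keep the Borel--Cantelli events $S_n$ genuinely independent one wants the rounds to occupy \emph{fixed} player-intervals $I_n$ (with a full successful ride still leaving a long pairs tail inside $I_n$ for the relaxation back toward $\tfrac12$), rather than letting the random bail times shift the boundaries. Finally, the quantitative tuning is the delicate point: balancing the three relations above is routine for $u<1$ with $u_n \equiv u$, but the boundary case $u=1$ requires letting $u_n \uparrow 1$ at a matched rate, so that $R_n\beta_n \to \infty$ while still $\beta_n \to 0$ and $\sum_n 2^{-R_n} = \infty$.
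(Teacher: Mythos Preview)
Your proposal is correct and follows essentially the same approach as the paper's own proof: partition players into fixed consecutive teams, within each team run a sequence of even-odd blocks that continue only while all preceding blocks have succeeded (so at most one block fails per team), follow with a long pairs ``recovery'' tail to pull $\overline{Z}_k$ back toward $\tfrac12$, and tune the number of blocks to grow like $\log k$ so that the second Borel--Cantelli lemma guarantees full success infinitely often. Your identification of the delicate points---measurability of the adaptive bail decision, fixed team boundaries to preserve independence, and the separate handling of the boundary case $u=1$---matches the paper's treatment as well.
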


The basic idea of the strategy is similar to that of Proposition \ref{L0U1}: assign sufficiently large finite blocks of players to play the even-odd strategy, so that each block guesses entirely correctly or incorrectly.    When a block is entirely correct, the long run of correct guesses pushes $\overline{Z}_n$ higher, and if we can ensure this happens infinitely often with probability $1$, we achieve a higher value for $U$.  However, such a block of players may just as well all guess incorrectly, which tends to push $\overline{Z}_n$ lower, and could result in a lower value for $L$ than the $\frac{1}{2}$ we want in this proposition.  So here, we group together smaller blocks, who will coordinate their strategies so as to give an adequate chance of a long enough string of successes, yet without risking a long string of failures.  Achieving both goals turns out to require a somewhat delicate balancing act.

\begin{proof}
  The case $u = \frac{1}{2}$ is handled by the pairs strategy, so we assume $u > \frac{1}{2}$.
  
  We begin by describing the strategy generically.  Specific values for the sequences of parameters mentioned ($n_k$, $g_k$, etc) will be chosen later in the proof.
  
  We partition the players into \emph{teams} $T_k = \{n_k + 1, \dots, n_{k+1}\}$.  Within each team, the first $g_k$ players are designated as the \emph{gambler} squad $G_k = \{ n_k + 1, \dots, n_k + g_k \}$, and the remaining $r_k$ are the \emph{recovery} squad $R_k = \{n_k + g_k + 1, \dots, n_k + g_k + r_k = n_{k+1}\}$.  The gambler squad is further subdivided into $b_k$ blocks of equal size $s_k$, so that $g_k = b_k s_k$, and for $1 \le j \le b_k$ the $j$th block of team $k$ is $B_{k,j} = \{n_k + (j-1) s_k + 1, \dots, n_k + j s_k\}$.

  Now, the strategy is as follows.  The teams all guess independently of each other, and do not need to look at the hats of any other teams.  The gamblers in block $B_{k,j}$ are to look at the hats of the preceding blocks $B_{k,1}, \dots, B_{k,j-1}$ of their team.  If each of those $j-1$ blocks contained an even number of white hats, so that under the even-odd strategy they would all guess correctly, then block $B_{k,j}$ plays the even-odd strategy within their block; otherwise, they play the pairs strategy.  The recovery squad $R_k$ plays the pairs strategy unconditionally, which will have the effect of pushing $\overline{Z}_n$ back toward $\frac{1}{2}$.

  Let $A_k$ be the event that every one of the $b_k$ gambler blocks of team $T_k$ had an even number of white hats.  Clearly $\mathbb{P}(A_k) = 2^{-b_k}$, and the events $A_k$ are mutually independent (since each team $T_k$ made their guesses based solely on the hats of their own team).  On this event, every block played the even-odd strategy, and they all won.  So the results for team $T_k$ are that the first $g_k$ members all guessed correctly, and the remaining $r_k$ played pairs.
  
  On $A_k^c$, let $M_k$ be the index of the first gambler block of team $T_k$ that had an odd number of white hats.  Then blocks $1, \dots, M_k-1$ had an even number of white hats, played even-odd, and won; block $M_k$ played even-odd and lost; and blocks $M_k+1, \dots, b_k$, as well as the recovery squad, all played pairs.  So in all cases, the results of team $T_k$ consist of some number of correct guesses (possibly zero), possibly followed by $s_k$ incorrect guesses, and all remaining players (at least $r_k$ of them) play pairs.

  Let us now choose the sizes of these various groups.  We want to ensure that $\mathbb{P}(A_k \text{ i.o.}) = 1$, so by the second Borel--Cantelli lemma, it suffices to have $\sum_{k=1}^\infty \mathbb{P}(A_k) = \sum_{k=1}^\infty 2^{-b_k} = +\infty$.  On the other hand, to ensure the upper density converges as desired to $u$, the number of blocks $b_k$ must increase ($b_k \to +\infty$), so that the fluctuations above and below the value of $u$ become small. We achieve both goals by setting $b_k = \lfloor \lg (k+2) \rfloor$, where adding 2 simply ensures that $b_k > 0$ for all $k \ge 0$.  (Here $\lg x = \log_2 x$ denotes the base 2 logarithm.)

  Fix a sequence $u_k \to u$ with $\frac{1}{2} < u_k < u$, and for later convenience, let the $u_k$ all be rational.  Moreover, in case $u=1$, let the $u_k$ be chosen so that $(1-u_k) b_k \to \infty$; for instance, we could take $u_k \approx 1 - \frac{1}{\sqrt{b_k}}$.  Set $\alpha_k = \frac{u_k - \frac{1}{2}}{1 - u_k}$, which is also rational and satisfies $\frac{\alpha_k}{b_k} \to 0$, and $\alpha_0 = 0$.  Also, fix any sequence $\epsilon_k > 0$ with $\epsilon_k \to 0$.
  
  Now construct $n_k, g_k, r_k$ recursively.  We will ensure that at each stage, $n_k \frac{\alpha_k}{b_k}$ is an even integer.  Begin with $n_0 = 0$, and set:
  \begin{itemize}
  \item $g_k = \alpha_k n_k$
  \item $r_k$ an even integer sufficiently large that
    \begin{equation*}
      \frac{1}{2} - \epsilon_{k+1} < \frac{\frac{1}{2} r_k}{n_k + g_k + r_k} \le \frac{\frac{1}{2} r_k + n_k + g_k}{n_k + g_k + r_k} < \frac{1}{2} + \epsilon_{k+1}
    \end{equation*}
    and chosen larger if needed so that $(n_k + g_k + r_k)\frac{\alpha_{k+1}}{b_{k+1}}$ is an even integer.  
  \item $n_{k+1} = n_k + g_k + r_k$.
  \end{itemize}
  Now each gambler block of team $k$ is of size $s_k = \frac{g_k}{b_k}$ which by construction is an even integer.

  We begin our proof by noting that, of the first $n_{k+1}$ players, the remainder squad $R_k$ of team $T_k$ was playing pairs, so exactly half of those $r_k$ players guessed correctly.  Without paying any attention to what the remaining $n_k + g_k$ players did, we know that the fraction of correct guesses $\overline{Z}_{n_{k+1}}$ among players $1, \dots, n_{k+1}$ is certainly between $\frac{\frac{1}{2} r_k}{n_{k+1}}$ and $\frac{\frac{1}{2} r_k + n_k + g_k}{n_{k+1}}$. So by our choice of $r_k$ and re-indexing, we have
  \begin{equation}\label{nk-bound}
    \frac{1}{2} - \epsilon_{k} < \overline{Z}_{n_k} < \frac{1}{2} + \epsilon_{k}, \qquad k \ge 1.
  \end{equation}

  Now we show this strategy gives the desired values for $L$ and $U$.  As we noted above, regardless of hat assignment, the results of team $T_k$ consist of some number of correct guesses (possibly zero), possibly followed by $s_k$ incorrect guesses, followed by pairs players.  The correct guesses will cause $\overline{Z}_i$ to increase, and the pairs players will cause it to approach $\frac{1}{2}$ (monotonically, if we restrict our attention to even $i$, which involves no loss of generality).  The $s_k$ potential incorrect guesses will affect $\overline{Z}_i$ the most if they come before any correct guesses.  So the least possible value for $\overline{Z}_i$ among $n_k \le i \le n_{k+1}$ occurs at $i_0 = n_k + s_k$ if the first gambler block has an odd number of white hats and guesses wrong.  This implies that for all $n_k \le i \le n_{k+1}$ we have
  \begin{equation*}
    \overline{Z}_i \ge \frac{n_k \overline{Z}_{n_k}}{n_k + s_k} = \overline{Z}_{n_k} - \frac{s_k}{n_k + s_k} \overline{Z}_{n_k} \ge \left(\frac{1}{2} - \epsilon_k\right) - \frac{s_k}{n_k}
  \end{equation*}
  using \eqref{nk-bound} in the first term, and $\frac{s_k}{n_k+s_k} \le \frac{s_k}{n_k}$ and $\overline{Z}_{n_k} \le 1$ in the second.  Now we have $\epsilon_k \to 0$, and also $\frac{s_k}{n_k} = \frac{\alpha_k}{b_k} \to 0$.  Since this inequality holds for all $k$, we have $L \ge \frac{1}{2}$.  As we already know $L \le \frac{1}{2}$ by Theorem \ref{main}, we have shown $L = \frac{1}{2}$.

  Next, note that the greatest possible value for $\overline{Z}_i$ occurs at $i_0 = n_k + g_k$, if all gambler blocks guessed correctly (the event $A_k$).  This gives
  \begin{equation*}
    \overline{Z}_i \le \frac{n_k \overline{Z}_{n_k} + g_k}{n_k + g_k} = \frac{\overline{Z}_{n_k} + \alpha_k}{1 + \alpha_k} \le
\frac{\left(\frac{1}{2} + \epsilon_k\right) + \alpha_k}{1 + \alpha_k}
 =    u_k + \frac{\epsilon_k}{1 + \alpha_k}.
  \end{equation*}
  Now $u_k \to u$, and $\frac{\epsilon_k}{1 + \alpha_k} < \epsilon_k \to 0$.  Thus we have $U \le u$.

  When $A_k$ does in fact happen, we have
  \begin{equation*}
     \overline{Z}_{n_k + g_k} = \frac{n_k \overline{Z}_{n_k} + g_k}{n_k + g_k} = \frac{\overline{Z}_{n_k} + \alpha_k}{1 + \alpha_k} \ge \frac{\left(\frac{1}{2} - \epsilon_k\right) + \alpha_k}{1 + \alpha_k} = u_k - \frac{\epsilon_k}{1 + \alpha_k}.
  \end{equation*}
 So on the event $\{A_k \text{ i.o.}\}$, which as we noted has probability $1$, we have $\overline{Z}_{n_k + g_k} \ge u_k - \frac{\epsilon_k}{1 + \alpha_k}$ infinitely often.  This implies
  \begin{equation*}
    U \ge \limsup_{k \to \infty} \left[ u_k - \frac{\epsilon_k}{1 + \alpha_k} \right] = u
  \end{equation*}
  almost surely.
\end{proof}

\begin{corollary}\label{cor:arbitrary-L-U}
  For any pair of values $0 \le \ell \le \frac{1}{2} \le u \le 1$ allowed by Theorem \ref{main}, there is a measurable strategy such that $L = \ell$ and $U = u$ almost surely.  
\end{corollary}

\begin{proof}
  If we make the gamblers ``play to lose'', so that block $B_{k,j}$ plays even-odd if every one of the blocks $B_{k,1}, \dots, B_{k, j-1}$ had an \emph{odd} number of white hats, we can get a strategy where $L = \ell$ and $U = \frac{1}{2}$ almost surely, for any $0 \le \ell \le \frac{1}{2}$.  Then if we alternate ``play to win'' and ``play to lose'' teams, we finally can obtain a strategy with $L = \ell$ and $U = u$ for any $0 \le \ell \le \frac{1}{2} \le u \le 1$.
\end{proof}

Mixed strategies are also possible.

\begin{corollary}
  For any probability distribution $\mu$ on $[0, \frac{1}{2}] \times [\frac{1}{2}, 1]$, there is a measurable strategy such that $(L,U) \sim \mu$.  That is, every \emph{joint distribution} for $(L,U)$ allowed by Theorem \ref{main} can actually be attained.
\end{corollary}

\begin{proof}
  Identify an infinite set $D = \{d_1, d_2, \dots\} \subseteq \mathbb{N}$ with asymptotic density zero (e.g. the square-numbered players), and designate those players as ``inactive''.  Their only role is to have their hat colors serve as a noise source, so they can guess whatever they want; being a set of density zero, their guesses cannot affect the values of $L$ or $U$.  The remaining ``active'' players re-index themselves by the positive integers, as if they were the only ones playing.  Now for any probability distribution $\mu$ on $[0, \frac{1}{2}] \times [\frac{1}{2}, 1]$ (or any other standard Borel space), we can find a measurable function $F : \{0,1\}^{\mathbb{N}} \to [0, \frac{1}{2}] \times [\frac{1}{2}, 1]$ such that $F(\xi_{d_1}, \xi_{d_2}, \dots) \sim \mu$.  The players evaluate $F(\xi_{d_1}, \xi_{d_2}, \dots) = (L_0, U_0)$, and then conditional on this, they execute a strategy as above which results in $L = L_0$ and $U = U_0$ almost surely.  By this approach, we have a mixed strategy for which $(L,U) \sim \mu$.
\end{proof}

In a different direction, following a suggestion by Elliot Glazer and Charles Wang (personal communication), one can study the \emph{absolute} number of ``excess'' correct guesses among the first $n$ players, $S_n - \frac{n}{2}$.  Theorem \ref{main} shows that, for measurable strategies, $S_n - \frac{n}{2}$ cannot grow at a linear rate with positive probability, but it does not rule out the possibility of a measurable strategy for which $S_n - \frac{n}{2} \to +\infty$ almost surely at a sub-linear rate.  In fact, this can be achieved.

\begin{proposition}\label{absolute-to-infinity}
  There is a measurable strategy under which $S_n - \frac{n}{2} \to +\infty$ almost surely.
\end{proposition}

\begin{proof}
  Divide the players into consecutive blocks of (even) sizes $t_k$, to be chosen later.  The players in each block only need to look at their own block's hats, so the blocks play independently of one another.

  Each block normally plays the pairs strategy, where each pair consists of an ``S'' player who guesses their own hat is the same as their partner, and a ``D'' player who guesses that their hat is different.  However, we introduce the following exception: if any player sees only black hats within their block, then that player is to abandon the pairs strategy and guess ``white''.   We have the following cases for each block:
  \begin{itemize}
  \item Bad case: all hats in the block are black. Then every player sees all black hats and guesses ``white,'' and they are all wrong. 
  \item Good case: all hats are black except one, and the one white hat belongs to an S player.  Suppose their name is Sam.  Then Sam sees all black hats, so following the exception, Sam guesses ``white'' and is correct.  Sam's partner (a D player) sees that Sam has a white hat, so following their normal D strategy, they guess ``black'' and are also correct.  In each remaining partnership, the normal pairs strategy is followed and exactly one player guesses right.  So the number of correct guesses in the block is $\frac{t_k}{2} + 1$.
  \item Neutral case: all other possibilities.  If at least two hats are white, then everyone sees at least one white hat and follows their normal pairs strategy.  Otherwise, exactly one hat is white and it belongs to an D player; call them Danny.  Danny guesses ``white'' according to the exception, but since Danny's partner has a black hat, this is what Danny was going to guess anyway.  So in either case, exactly $\frac{t_k}{2}$ of the block's players guess right.
  \end{itemize}
  Let $B_k$ be the event that block $k$ is bad, and $G_k$ the event that it is good.  It is easy to see that $\mathbb{P}(B_k) = 2^{-t_k}$, and $\mathbb{P}(G_k) = \frac{t_k}{2} \cdot 2^{-t_k}$.

  Now we would like to arrange that, almost surely, only finitely many blocks are bad, and infinitely many blocks are good.  Since every good block increases $S_n - \frac{n}{2}$ by $1$, this will ensure $S_n - \frac{n}{2} \to +\infty$ a.s.  (Also note that within a good or neutral block, at least one of every pair guesses correctly, so $S_n - \frac{n}{2}$ is nondecreasing within such a block if we consider only even values for $n$.)

  By the Borel--Cantelli lemmas, this will hold provided that $\sum_k \mathbb{P}(B_k) = \sum_k 2^{-t_k} < \infty$ and (using the independence of blocks) $\sum_k \mathbb{P}(G_k) = \sum_k \frac{t_k}{2} 2^{-t_k} = \infty$.  There are various choices for $t_k$ that will achieve this; for instance, one may take
  \begin{equation*}
    t_k = \lg (k \lg^2 k) = \lg k + 2 \lg \lg k
  \end{equation*}
  (disregarding $k=0,1$ and rounding to the nearest even integer).  The convergence and divergence of the relevant sums can be confirmed with the integral test.
\end{proof}

\begin{remark}
It may seem at first glance that Proposition \ref{absolute-to-infinity} would contradict the ``symmetry'' of the game, and in particular the fact that $\mathbb{E}[S_n] = \frac{n}{2}$ for all $n$.  But one can note in the strategy that although the number of bad blocks, and hence the total number of players they contain, is a.s. finite, yet the \emph{expected} number of players in bad blocks is infinite.  So the finitely many extra incorrect guesses from the bad blocks actually do still balance with the infinitely many extra correct guesses from the good blocks, ``on average.''
\end{remark}

\begin{remark}
  It can be shown that, for the strategy in the proof of Proposition \ref{absolute-to-infinity}, taking $t_k = \lg (k \lg^2 k)$, we in fact have $S_n - \frac{n}{2} \asymp \log \log n$ almost surely.  (By this we mean that there are random variables $0 < K_1 < K_2 < \infty$ and $1 \le N_0 < \infty$ such that $K_1 \log \log n \le S_n - \frac{n}{2} \le K_2 \log \log n$ for all $n \ge N_0$, almost surely.)  
  This can be sped up somewhat by choosing a slightly slower-growing $t_k$; for instance, taking $t_k = \lg\left(k \lg k \cdot (\lg \lg k)^2\right)$, we get $S_n - \frac{n}{2} \asymp \frac{\log n}{(\log \log n)^2}$.  However, variations on this theme do \emph{not} attain $S_n - \frac{n}{2} \asymp \log n$.  This raises the following interesting questions, to which we do not know the answer:
  \begin{enumerate}
  \item     Does there exist a measurable strategy under which we have $S_n - \frac{n}{2} \asymp \log n$ almost surely, or even some faster rate?
  \item Can one obtain a universal upper bound on the lower growth rate of $S_n - \frac{n}{2}$, over all measurable strategies, stronger than  that of Theorem \ref{main}?  That is, to find a function $g(n)$ such that for every measurable strategy we have $\liminf_{n \to \infty} \frac{1}{g(n)} (S_n - \frac{n}{2}) = 0$ almost surely.  Theorem \ref{main} shows this holds for $g(n)=n$, but from the discussion above, it is conceivable that it could hold for a rate as slow as $g(n) = \log n$.
  \end{enumerate}
\end{remark}

\section{Variants of the hat-guessing game}\label{sec:variants}

\subsection{More hat colors}

Suppose that instead of two hat colors, we have some larger set $C$ of possible hat colors, which may be finite or infinite.   Intuitively, this seems to make the game harder to win.  Yet in the $\mathsf{AC}$ setting with infinitely many players, the Gabay--O'Connor strategy still works exactly as before, guaranteeing that all but finitely many players guess correctly!  The Lenstra strategy can also be adapted by replacing $\mathbb{F}_2$ by an abelian group of the same cardinality as $C$ (under $\mathsf{AC}$ such a group exists for every cardinality, by the upward L\"owenheim--Skolem theorem \cite{hajnal-kertesz}). As before, it guarantees that either all players guess right or all players guess wrong.  

For finitely many players and finitely many colors $K=|C|$, we have analogues of the ``even-odd'' and ``pairs'' strategy, where instead of using parity, we index the hat colors by the integers $\{0, \dots, K-1\}$, and think about sums of the hat colors mod $K$ \cite{feige}. This gives us a strategy where all players guess right with probability $\frac{1}{K}$ (and otherwise all guess wrong), and a strategy that guarantees that exactly $\frac{1}{K}$ of the players guess right.  Using as before the fact that every player's guess is independent of their own hat color, we get that the expected fraction of players guessing right is $\frac{1}{K}$, and so both strategies are best possible.

Now we turn to the case of most interest to us, where there are countably many players and only measurable strategies are allowed.  If the number of hat colors $K$ is finite, one can easily adapt the proof of Theorem \ref{main} to show that almost surely, $0 \le L \le \frac{1}{K} \le U \le 1$.

If countably many hat colors are available, then it can be arbitrarily unlikely for even one player to guess correctly.  Indeed, fix $\epsilon > 0$, and choose a sequence of integers $K_i$ for which $\sum_{i=1}^\infty \frac{1}{K_i} < \epsilon$ (for instance, take $K_i > 2^i/\epsilon$).  Assign hats to players independently, such that the hat color of player $i$ is chosen uniformly from the colors $\{0, \dots, K_i-1\}$.  Let us generously allow the players to know the values of the integers $K_i$, so that each one always guesses a color in the appropriate range.  (If not, the probability of correct guessing would only decrease.)  Then, for any measurable strategy, as before, player $i$ has probability $\frac{1}{K_i}$ of guessing correctly.  By a union bound, the probability that at least one player guesses correctly is bounded above by $\sum_{i=1}^\infty \frac{1}{K_i} < \epsilon$.  This is best possible, since if a player makes their guess according to any  distribution giving positive probability to each nonnegative integer (say, the Poisson distribution), one can show that they have a nonzero probability of being correct, no matter how the hat colors are selected.

Thus, with infinitely many hat colors, there can be no measurable strategy that guarantees at least one correct guess (a \emph{minimal predictor} in the terminology of Hardin and Taylor; see \cite{ht-book,ht-minimal}).  So the success of the Gabay--O'Connor strategy seems even more unreasonable in this setting.  

Beyond this, consider the case of continuum many hat colors; say, $C=[0,1]$. If each player is independently assigned a hat chosen according to the uniform distribution on $[0,1]$ (or any other continuous or atomless distribution), then we can show that for any measurable strategy, each player guesses incorrectly with probability $1$.  Taking a countable intersection of these almost sure events, we see that almost surely, \emph{all} players guess incorrectly.  In contrast, the Gabay--O'Connor strategy works just as well as before, which now seems to be passing from ``unreasonable'' to ``preposterous''.

\subsection{More players}

One could imagine playing the hat-guessing game with an uncountable set of players.  We have not attempted to explore this case from a probability perspective, since the model of assigning hat colors in an iid fashion behaves poorly, owing to the deficiencies of uncountable product measures, and no other model seems as natural.  We note, however, that the Gabay--O'Connor and Lenstra strategies \emph{still} work exactly as before, even if the set of players \emph{and} the set of hat colors are both of arbitrarily high cardinalities.  ``Preposterous'' does not seem like a strong enough word for this phenomenon!

\subsection{Limited visibility and other variants}\label{sec:other-variants}\label{sec:limited-vis}

Much of the literature on hat-guessing games involves ``limited visibility'' variants, where each player may only be able to see some subset of the other players' hats.  For instance, the players might stand in a line, with each player able to see only those players behind them.  They might also allow for some limited communication among the players after hats are assigned; for instance, the players might be asked to guess sequentially, with each player hearing the guesses of all preceding players.  Or, we might allow some players to ``pass'' and refrain from guessing, or consider different victory conditions.

For variant hat-guessing games involving infinitely many players, the axiom of choice is again often relevant, allowing strategies that are far more effective than without it.  We refer to \cite{
  ht-book,
  ht-minimal,
  glr,
  horsten-aftermath,
  Velleman2012,
  hamkins-blog,
  serafin}
where many such variants are posed and analyzed.

There is a much larger literature on variant hat-guessing games for finitely many players.  We cannot give an exhaustive list, but would mention in particular
\cite{
  ht-book,
  butler-et-al,
  pratt-wagon-wiener,
  buhler,
  ebert-thesis,
  feige,
  winkler,
  krzywkowski-survey,
  hamkins-blog,
  a-dozen-hat-problems,
  gnome-formula,
  paterson-stinson,
  more-colorful-hat,
  hat-problem-and-some-variations,
  ebert-merkle-vollmer,
  line-of-sages,
  bicycle-or-unicycle}.

\bibliographystyle{plainnat}
\bibliography{hats}

\begin{thebibliography}{33}
\providecommand{\natexlab}[1]{#1}
\providecommand{\url}[1]{\texttt{#1}}
\expandafter\ifx\csname urlstyle\endcsname\relax
  \providecommand{\doi}[1]{doi: #1}\else
  \providecommand{\doi}{doi: \begingroup \urlstyle{rm}\Url}\fi

\bibitem[Brown and Tanton(2009)]{a-dozen-hat-problems}
Ezra Brown and James Tanton.
\newblock A dozen hat problems.
\newblock \emph{Math Horizons}, 16\penalty0 (4):\penalty0 22--25, 2009.
\newblock \doi{10.1080/10724117.2009.11974827}.
\newblock URL \url{https://doi.org/10.1080/10724117.2009.11974827}.

\bibitem[Buhler(2002)]{buhler}
J.~P. Buhler.
\newblock Hat tricks.
\newblock \emph{Math. Intelligencer}, 24\penalty0 (4):\penalty0 44--49, 2002.
\newblock ISSN 0343-6993.
\newblock \doi{10.1007/BF03025323}.
\newblock URL \url{https://doi.org/10.1007/BF03025323}.

\bibitem[Butler et~al.(2008)Butler, Hajiaghayi, Kleinberg, and
  Leighton]{butler-et-al}
Steve Butler, Mohammad~T. Hajiaghayi, Robert~D. Kleinberg, and Tom Leighton.
\newblock Hat guessing games.
\newblock \emph{SIAM J. Discrete Math.}, 22\penalty0 (2):\penalty0 592--605,
  2008.
\newblock ISSN 0895-4801.
\newblock \doi{10.1137/060652774}.
\newblock URL \url{https://doi.org/10.1137/060652774}.

\bibitem[Durrett(2019)]{durrett-pte5}
Rick Durrett.
\newblock \emph{Probability---theory and examples}, volume~49 of
  \emph{Cambridge Series in Statistical and Probabilistic Mathematics}.
\newblock Cambridge University Press, Cambridge, fifth edition, 2019.
\newblock ISBN 978-1-108-47368-2.
\newblock \doi{10.1017/9781108591034}.
\newblock URL \url{https://doi.org/10.1017/9781108591034}.

\bibitem[Ebert(1998)]{ebert-thesis}
Todd Ebert.
\newblock \emph{Applications of recursive operators to randomness and
  complexity}.
\newblock ProQuest LLC, Ann Arbor, MI, 1998.
\newblock ISBN 978-0599-08731-6.
\newblock URL \url{https://dl.acm.org/doi/10.5555/927911}.
\newblock Thesis (Ph.D.)--University of California, Santa Barbara.

\bibitem[Ebert et~al.(2003)Ebert, Merkle, and Vollmer]{ebert-merkle-vollmer}
Todd Ebert, Wolfgang Merkle, and Heribert Vollmer.
\newblock On the autoreducibility of random sequences.
\newblock \emph{SIAM J. Comput.}, 32\penalty0 (6):\penalty0 1542--1569, 2003.
\newblock ISSN 0097-5397,1095-7111.
\newblock \doi{10.1137/S0097539702415317}.
\newblock URL \url{https://doi.org/10.1137/S0097539702415317}.

\bibitem[Feige(2004)]{feige}
Uriel Feige.
\newblock You can leave your hat on (if you guess its color).
\newblock Technical report MCS04-03 of the Weizmann Institute, 2004.
\newblock URL
  \url{https://www.wisdom.weizmann.ac.il/\%7Efeige/TechnicalReports/hatspdf.pdf}.

\bibitem[Galvin and Silverman(1966)]{galvin-silverman-solution}
Fred Galvin and D.~L. Silverman.
\newblock Solution to problem 5348: The sequence function, $x_n = f(x_{n+1},
  x_{n+2}, \cdots)$.
\newblock \emph{The American Mathematical Monthly}, 73\penalty0 (10):\penalty0
  1131--1132, 1966.
\newblock ISSN 00029890, 19300972.
\newblock URL \url{http://www.jstor.org/stable/2314667}.

\bibitem[Galvin and Thorp(1967)]{thorp-solution}
Fred Galvin and B.~L.~D. Thorp.
\newblock Editorial note on problem 5348: The sequence function, $x_n =
  f(x_{n+1}, x_{n+2}, \cdots)$.
\newblock \emph{The American Mathematical Monthly}, 74\penalty0 (6):\penalty0
  730--731, 1967.
\newblock ISSN 00029890, 19300972.
\newblock URL \url{http://www.jstor.org/stable/2314288}.

\bibitem[Geschke et~al.(2015)Geschke, Lubarsky, and Rahn]{glr}
Stefan Geschke, Robert Lubarsky, and Mona Rahn.
\newblock Choice and the hat game.
\newblock Preprint, 2015.
\newblock URL
  \url{http://math.fau.edu/lubarsky/Choice\%20and\%20the\%20Hat\%20Game.pdf}.

\bibitem[Guo et~al.(2006)Guo, Kasala, Bhaskara~Rao, and
  Tucker]{hat-problem-and-some-variations}
Wenge Guo, Subramanyam Kasala, M.~Bhaskara~Rao, and Brian Tucker.
\newblock The hat problem and some variations.
\newblock In \emph{Advances in distribution theory, order statistics and
  inference. Selected papers based on the presentations at the international
  conference, Santander, Spain, June 16--18, 2004. In honor of Barry C.
  Arnold.}, pages 459--479. Boston, MA: Birkh{\"a}user, 2006.
\newblock ISBN 0-8176-4361-3.

\bibitem[Hajnal and Kert{\'e}sz(1973)]{hajnal-kertesz}
Andr{\'a}s Hajnal and A.~Kert{\'e}sz.
\newblock Some new algebraic equivalents of the axiom of choice.
\newblock \emph{Publ. Math. Debr.}, 19:\penalty0 339--340, 1973.
\newblock ISSN 0033-3883.

\bibitem[Hamkins(2024)]{hamkins-blog}
Joel~David Hamkins.
\newblock Infinitary hat puzzle and the {A}ftermath.
\newblock Substack post, 2024.
\newblock URL
  \url{https://www.infinitelymore.xyz/p/infinitary-hat-puzzles-and-the-aftermath}.
\newblock Accessed 15 Dec 2024.

\bibitem[Hardin and Taylor(2008)]{ht-intel}
Christopher~S. Hardin and Alan~D. Taylor.
\newblock An introduction to infinite hat problems.
\newblock \emph{Math. Intelligencer}, 30\penalty0 (4):\penalty0 20--25, 2008.
\newblock ISSN 0343-6993.
\newblock \doi{10.1007/BF03038092}.
\newblock URL \url{https://doi.org/10.1007/BF03038092}.

\bibitem[Hardin and Taylor(2010)]{ht-minimal}
Christopher~S. Hardin and Alan~D. Taylor.
\newblock Minimal predictors in hat problems.
\newblock \emph{Fund. Math.}, 208\penalty0 (3):\penalty0 273--285, 2010.
\newblock ISSN 0016-2736.
\newblock \doi{10.4064/fm208-3-4}.
\newblock URL \url{https://doi.org/10.4064/fm208-3-4}.

\bibitem[Hardin and Taylor(2013)]{ht-book}
Christopher~S. Hardin and Alan~D. Taylor.
\newblock \emph{The mathematics of coordinated inference}, volume~33 of
  \emph{Developments in Mathematics}.
\newblock Springer, Cham, 2013.
\newblock ISBN 978-3-319-01332-9; 978-3-319-01333-6.
\newblock \doi{10.1007/978-3-319-01333-6}.
\newblock URL \url{https://doi.org/10.1007/978-3-319-01333-6}.

\bibitem[Hardy and Wright(1979)]{hardy-wright}
G.~H. Hardy and E.~M. Wright.
\newblock \emph{An introduction to the theory of numbers}.
\newblock The Clarendon Press, Oxford University Press, New York, fifth
  edition, 1979.
\newblock ISBN 0-19-853170-2.

\bibitem[Horsten and Welch(2013)]{horsten-aftermath}
Leon Horsten and Philip Welch.
\newblock The aftermath.
\newblock \emph{Math. Intelligencer}, 35\penalty0 (1):\penalty0 16--20, 2013.
\newblock ISSN 0343-6993,1866-7414.
\newblock \doi{10.1007/s00283-012-9353-5}.
\newblock URL \url{https://doi.org/10.1007/s00283-012-9353-5}.

\bibitem[Khovanova(2014)]{line-of-sages}
Tanya Khovanova.
\newblock A line of sages.
\newblock \emph{Math. Intelligencer}, 36\penalty0 (3):\penalty0 44--46, 2014.
\newblock ISSN 0343-6993,1866-7414.
\newblock \doi{https://doi.org/10.1007/s00283-013-9415-3}.

\bibitem[Krzywkowski(2010)]{krzywkowski-survey}
Marcin Krzywkowski.
\newblock On the hat problem, its variations, and their applications.
\newblock \emph{Ann. Univ. Paedagog. Crac. Stud. Math.}, 9:\penalty0 55--67,
  2010.
\newblock ISSN 2081-545X,2300-133X.

\bibitem[Krzywkowski(2011)]{more-colorful-hat}
Marcin Krzywkowski.
\newblock A more colorful hat problem.
\newblock \emph{Ann. Univ. Paedagog. Crac. Stud. Math.}, 10:\penalty0 67--77,
  2011.
\newblock ISSN 2081-545X,2300-133X.

\bibitem[Littlewood(1944)]{littlewood}
J.~E. Littlewood.
\newblock \emph{Lectures on the {T}heory of {F}unctions}.
\newblock Oxford University Press,, 1944.

\bibitem[Paterson and Stinson(2010)]{paterson-stinson}
Maura~B. Paterson and Douglas~R. Stinson.
\newblock Yet another hat game.
\newblock \emph{Electron. J. Combin.}, 17\penalty0 (1):\penalty0 Research Paper
  86, 12, 2010.
\newblock ISSN 1077-8926.
\newblock \doi{10.37236/358}.
\newblock URL \url{https://doi.org/10.37236/358}.

\bibitem[Pratt et~al.(2019)Pratt, Wagon, Wiener, and
  Zieli\'{n}ski]{pratt-wagon-wiener}
Rob Pratt, Stan Wagon, Michael Wiener, and Piotr Zieli\'{n}ski.
\newblock Too many hats.
\newblock \emph{Math. Intelligencer}, 41\penalty0 (3):\penalty0 66--71, 2019.
\newblock ISSN 0343-6993.
\newblock \doi{10.1007/s00283-019-09896-6}.
\newblock URL \url{https://doi.org/10.1007/s00283-019-09896-6}.

\bibitem[Schechter(1997)]{schechter}
Eric Schechter.
\newblock \emph{Handbook of analysis and its foundations}.
\newblock Academic Press, Inc., San Diego, CA, 1997.
\newblock ISBN 0-12-622760-8.

\bibitem[Serafin(2023)]{serafin}
Luke Serafin.
\newblock Ultrafilters, {Transversals}, and the {Hat} {Game}.
\newblock Preprint, {arXiv}:2311.07886 [math.{LO}] (2023), 2023.
\newblock URL \url{https://arxiv.org/abs/2311.07886}.

\bibitem[Shelah(1984)]{shelah-solovays-inaccessible}
Saharon Shelah.
\newblock Can you take {S}olovay's inaccessible away?
\newblock \emph{Israel J. Math.}, 48\penalty0 (1):\penalty0 1--47, 1984.
\newblock ISSN 0021-2172.
\newblock \doi{10.1007/BF02760522}.
\newblock URL \url{https://doi.org/10.1007/BF02760522}.

\bibitem[Solovay(1970)]{Solovay1970}
Robert~M. Solovay.
\newblock A model of set-theory in which every set of reals is {L}ebesgue
  measurable.
\newblock \emph{Ann. of Math. (2)}, 92:\penalty0 1--56, 1970.
\newblock ISSN 0003-486X.
\newblock \doi{10.2307/1970696}.
\newblock URL \url{https://doi.org/10.2307/1970696}.

\bibitem[Taylor and Wagon(2019)]{taylor-wagon-paradox}
Alan~D. Taylor and Stan Wagon.
\newblock A paradox arising from the elimination of a paradox.
\newblock \emph{Amer. Math. Monthly}, 126\penalty0 (4):\penalty0 306--318,
  2019.
\newblock ISSN 0002-9890,1930-0972.
\newblock \doi{10.1080/00029890.2019.1559416}.
\newblock URL \url{https://doi.org/10.1080/00029890.2019.1559416}.

\bibitem[van~den Brandhof(2023)]{gnome-formula}
Alex van~den Brandhof.
\newblock \emph{De Kabouterformule (The Gnome Formula)}.
\newblock Prometheus, Amsterdam, 2023.
\newblock ISBN 9789044653793.
\newblock In Dutch.

\bibitem[Velleman(2012)]{Velleman2012}
Daniel~J. Velleman.
\newblock The even-odd hat problem.
\newblock \emph{Fundam. Math.}, 219\penalty0 (2):\penalty0 105--110, 2012.
\newblock ISSN 0016-2736.
\newblock \doi{10.4064/fm219-2-2}.

\bibitem[Velleman and Wagon(2020)]{bicycle-or-unicycle}
Daniel~J. Velleman and Stan Wagon.
\newblock \emph{Bicycle or unicycle?---a collection of intriguing mathematical
  puzzles}, volume~36 of \emph{AMS/MAA Problem Books Series}.
\newblock Mathematical Association of America, Washington, DC; American
  Mathematical Society, Providence, RI, 2020.
\newblock ISBN 978-1-4704-4759-5.

\bibitem[Winkler(2002)]{winkler}
Peter Winkler.
\newblock Games people don't play.
\newblock In David Wolfe and Tom Rodgers, editors, \emph{Puzzler's tribute: a
  feast for the mind}, pages 301--313. A K Peters, Ltd., Natick, MA, 2002.
\newblock ISBN 1-56881-121-7.

\end{thebibliography}

\end{document}